\theoremstyle{plain}
\newtheorem{theorem}{Theorem}[section]
\newtheorem{lemma}[theorem]{Lemma}
\newtheorem{corollary}[theorem]{Corollary}
\numberwithin{equation}{section}
\theoremstyle{definition}
\newtheorem{definition}[theorem]{Definition}
\newtheorem{example}[theorem]{Example}
\newtheorem{remark}[theorem]{Remark}
\newcommand{\C}{{\mathscr{C}}}
\newcommand{\T}{{\mathscr{T}}}
\newcommand{\FI}{{\mathscr{FI}}}
\newcommand{\OI}{{\mathscr{OI}}}
\newcommand{\VI}{{\mathscr{VI}}}
\newcommand{\FIM}{{\mathscr{FIM}}}
\newcommand{\m}{{\mathfrak{m}}}
\newcommand{\tC}{{\underline{\mathscr{C}}}}
\DeclareMathOperator{\Tor}{Tor}
\DeclareMathOperator{\gd}{gd}
\DeclareMathOperator{\Module}{-Mod}
\DeclareMathOperator{\module}{-mod}
\DeclareMathOperator{\lfMod}{-lfMod}
\DeclareMathOperator{\td}{td}
\DeclareMathOperator{\hd}{hd}
\DeclareMathOperator{\reg}{reg}
\DeclareMathOperator{\ann}{ann}
\title[An inductive machinery for representations of categories]{An inductive machinery for representations of categories with shift functors}
\author{Wee Liang Gan}
\address{Department of Mathematics, University of California, Riverside, CA 92521, USA.}
\email{wlgan@math.ucr.edu}
\author{Liping Li}
\address{Key Laboratory of High Performance Computing and Stochastic Information Processing (Ministry of Education), College of Mathematics and Computer Science, Hunan Normal University, Changsha, Hunan 410081, China.}
\email{lipingli@hunnu.edu.cn}
\thanks{The second author is supported by the National Natural Science Foundation of China 11541002, the Construct Program of the Key Discipline in Hunan Province, and the Start-Up Funds of Hunan Normal University 830122-0037. Both authors appreciate the anonymous referee for carefully reading the manuscript and providing quite many valuable and detailed comments, which highly improve the paper.}
\begin{document}

\begin{abstract}
We describe an inductive machinery to prove various properties of representations of a category equipped with a generic shift functor. Specifically, we show that if a property (P) of representations of the category behaves well under the generic shift functor, then all finitely generated representations of the category have the property (P). In this way, we obtain simple criteria for properties such as Noetherianity, finiteness of Castelnuovo-Mumford regularity, and polynomial growth of dimension to hold. This gives a systemetic and uniform proof of such properties for representations of the categories $\FI_G$ and $\OI_G$ which appear in representation stability theory.
\end{abstract}

\maketitle

\section{Introduction}

\subsection{Motivation}

The \emph{shift functor}, introduced by Church, Ellenberg, Farb, and Nagpal in \cite{CEF, CEFN}, has proven to be very useful in the representation theory of the category $\FI$, whose objects are the finite sets and morphisms are the injections between them. In particular, it plays a central role in the proofs of the following fundamental results: the category of finitely generated $\FI$-modules over a commutative Noetherian ring is abelian (\cite[Theorem A]{CEFN}); the \emph{Castelnuovo-Mumford regularity} of an $\FI$-module over any commutative ring has an upper bound given in terms of its first two homological degrees (\cite[Theorem A]{CE}); after applying the shift functor $S$ to a finitely generated $\FI$-module over a commutative Noetherian ring enough times, it becomes a $\sharp$-filtered module with particularly nice homological properties (see \cite[Theorem A]{N} and \cite[Theorem C]{LR}). The shift functor was also utilized by Ramos, Yu, and the authors to prove many other representation theoretic and homological properties of $\FI$-modules and $\FI_G$-modules in \cite{Gan1, Gan2, L1, L2, L3, LR, LY, R1, R2}. A key realization in these papers is that the shift functor allows one to make inductive arguments without relying too heavily on any special combinatorial structure of the categories $\FI$ and $\FI_G$.

Since many interesting combinatorial categories $\C$ appearing in representation stability theory are also equipped with shift functors (see \cite{GL}), it is natural to ask if one can use their shift functors to prove similar results. It would also be nice if one can axiomatize the properties of the shift functor used in the proofs. In this paper, we take a first step towards this by describing an inductive machinery for one to prove various properties of representations of categories with shift functors through a formal procedure. Roughly speaking, if a property (P) behaves well under the shift functor $S$, then all finitely generated representations of the category over a commutative Noetherian ring have the property (P). Moreover, to check whether (P) behaves well with $S$, it is often the case that one only needs to verify that the shifted modules of projective modules satisfy certain assumptions, which in practice are not hard. In particular, we provide some useful criteria for Noetherianity and finite Castelnuovo-Mumford regularity, and show that every finitely generated module of the category over a field must have the polynomial growth property.  These results immediately apply to the categories $\FI_G$ and $\OI_G$ (see \cite{L1, SS2} for definitions).

\subsection{Notation and terminologies}

To formulate the main result of this paper, we introduce some notation and terminologies. Throughout this paper, let $k$ be a commutative ring with identity and let $\mathbb{N}$ be the set of nonnegative integers. By $\tC$, we mean a category satisfying the following conditions:
\begin{itemize}
\item the objects of $\tC$ are parameterized by the nonnegative integers;
\item $\tC$ is \textit{$k$-linear}, i.e., $\tC$ is enriched over the category of $k$-modules; in particular, for every pair of objects $r, s$ in $\tC$, the set $\tC(r,s)$ of morphisms from $r$ to $s$ is a $k$-module;
\item $\tC$ is \textit{locally finite}, i.e., $\tC(r, s)$ is a finitely generated $k$-module for $r, s \in \mathbb{N}$;
\item $\tC$ is \textit{directed}, i.e., $\tC(r, s) = 0$ whenever $r > s$.
\end{itemize}

By definition, a \textit{representation} $V$ of $\tC$ (or a left \textit{$\tC$-module} $V$) is a covariant $k$-linear functor from $\tC$ to the category of $k$-modules. For any $\tC$-module $V$ and object $s$ of $\tC$, we shall write $V_s$ for $V(s)$. Where convenient, we shall also view $\tC$ as a $k$-algebra $\bigoplus_{0\leqslant r \leqslant s} \tC(r,s)$ and a $\tC$-module $V$ as $\bigoplus_{s\geqslant 0} V_s$. A representation $V$ is \textit{locally finite} if $V_s$ is a finitely generated $k$-module for every object $s$ of $\tC$. It is \textit{finitely generated} if $V$ contains a finite set such that the only $\tC$-submodule of $V$ containing this set is $V$ itself. For any $s \geqslant 0$, we say that $V$ is \emph{generated in degrees $\leqslant s$} if the only $\tC$-submodule of $V$ containing $\bigoplus_{r \leqslant s} V_r$ is $V$; we say $V$ is generated in degrees $\leqslant -1$ if $V = 0$. The \emph{generating degree} $\gd(V)$ of a $\tC$-module $V$ is defined to be the minimal $s\geqslant -1$ such that $V$ is generated in degrees $\leqslant s$; if no such $s$ exists, we set $\gd(V)$ to be $\infty$. (In some papers, the convention that the zero $\tC$-module has generating degree $-\infty$ is used. For us, it is more convenient to use the convention that the zero $\tC$-module has generating degree $-1$.)

The categories of all $\tC$-modules, locally finite $\tC$-modules and the category of finitely generated $\tC$-modules are denoted by $\tC \Module$, $\tC \lfMod$ and $\tC \module$ respectively. For any $s\geqslant 0$, define the $\tC$-module $M(s)$ to be the representable functor $\tC(s, -)$, so
\begin{equation*}
M(s)_r = \tC(s, r), \quad \mbox{ for each } r \geqslant 0;
\end{equation*}
equivalently, viewing $M(s)$ as a module over the $k$-algebra $\tC$, we have $M(s) = \tC e_s$ where $e_s$ is the identity element of the $k$-algebra $\tC(s, s)$. Then $M(s)$ is a projective $\tC$-module. The category of $\tC$-modules has enough projectives. It is plain that a $\tC$-module $V$ is finitely generated if and only if there is a surjection
\begin{equation*}
\bigoplus _{s \geqslant 0} M(s)^{\oplus m_s} \to V \quad \mbox{ where } \quad \sum_{s \geqslant 0} m_s < \infty.
\end{equation*}
In particular, $\tC \module$ is a full subcategory of $\tC \lfMod$. If $V\in \tC \lfMod$, then $V\in \tC \module$ if and only if $\gd(V) < \infty$.

For a nonnegative integer $n$, we say that a finitely generated $\tC$-module $V$ is \emph{$n$-finitely presented} if there is a projective resolution
\begin{equation*}
\ldots \to P^2 \to P^1 \to P^0 \to V \to 0
\end{equation*}
such that $P^i$ is finitely generated for each $i \leqslant n$; we say that $V$ is \textit{super finitely presented} if it is $n$-finitely presented for every $n \geqslant 0$.

\subsection{Generic shift functor}
We now introduce the definition of \textit{shift functor} which plays the central role in this paper.

\begin{definition}
A functor $\iota: \tC \to \tC$ is called a \emph{self-embedding functor} of degree 1 if it is faithful and $\iota(s) = s+1$ for every $s \geqslant 0$. A functor $S: \tC \Module \to \tC \Module$ is called a \emph{shift functor} if it is the pull-back functor $\iota^{\ast}$ induced by a self-embedding functor $\iota: \tC \to \tC$ of degree 1.
\end{definition}

Suppose $\tC$ has a shift functor $S$. It is clear that $S$ is an exact functor. It is also clear that $S$ preserves locally finite $\tC$-modules and hence restricts to a functor (which we still denote by $S$) from $\tC \lfMod$ to itself.

\begin{definition}
A shift functor $S: \tC \Module \to \tC \Module$ is called a \emph{generic} shift functor if there is a natural transformation $\mu : \mathrm{Id} \to S$ (where $\mathrm{Id}$ is the identity functor on $\tC \Module$) such that, for every $s\geqslant 0$, the map $\mu_{M(s)} : M(s) \to S M(s)$ is injective and its cokernel is generated in degrees $\leqslant s-1$.
\end{definition}

\emph{From now on, we suppose that $S$ is a generic shift functor as in the above definition.}

For every $\tC$-module $V$, we shall  write $KV$ and $DV$ for, respectively, the kernel and the cokernel of $\mu_V : V \to SV$. Thus, we have the following exact sequence
\begin{equation} \label{key sequence}
0 \to KV \to V \to SV \to DV \to 0.
\end{equation}
If $0\to U\to V\to W\to 0$ is a short exact sequence of $\tC$-modules, then by the snake lemma, there is an exact sequence
\begin{equation*}
0 \to KU \to KV \to KW \to DU \to DV \to DW \to 0.
\end{equation*}
In particular, the functor $D: V\mapsto DV$ is right exact; it is called the \emph{derivative functor} on $\tC \Module$.

\begin{remark} \normalfont
The shift functor $S$ for $\FI$-modules is generic and, moreover, has the property that $S$ and $D$ both preserve projective $\FI$-modules (see \cite{CEFN}). However, in our definitions above (and in the results that we shall prove later), we do not require $S$ or $D$ to preserve projective $\tC$-modules.
\end{remark}

In this paper, we shall study the properties of $\tC$-modules that behave well under the generic shift functor.

\begin{definition}
Suppose $\T$ is a subcategory of $\tC \Module$ and $F : \T \to \T$ is a functor. We say that a property (P) of some $\tC$-modules is:
\begin{itemize}
\item
\emph{$F$-invariant on $\T$} if, for every $V\in \T$:
\begin{equation*}
\mbox{$V$ has property (P)} \quad \Longrightarrow \quad \mbox{$FV$ has property (P);}
\end{equation*}

\item
\emph{$F$-dominant on $\T$} if, for every $V\in \T$:
\begin{equation*}
\mbox{$FV$ has property (P)} \quad \Longrightarrow \quad \mbox{$V$ has property (P);}
\end{equation*}

\item
\emph{$F$-predominant on $\T$} if, for every $V\in \T$:
\begin{equation*}
\mbox{$FV$ has property (P) and $KV=0$} \quad \Longrightarrow \quad  \mbox{$V$ has property (P).}
\end{equation*}
\end{itemize}
\end{definition}

Our usage of the term ``property" in the above definition may seem colloquial to some readers. One can formalize this as follows. By a \emph{property} (P) of some $\tC$-modules, we mean a function $\mathrm{P}$ from the set of $\tC$-modules to the 2-element set $\{\mathrm{Yes},\mathrm{No}\}$; we say that a $\tC$-module $V$ \emph{has property} (P) if $\mathrm{P}(V)=\mathrm{Yes}$.

\subsection{The inductive machinery}
Let us briefly describe the induction argument that we shall use in this paper.

We shall see later (in Lemma \ref{main technical lemma}) that if $V\in \tC \module$, then $SV, DV \in \tC \module$. Let (P) be a property of some $\tC$-modules and assume that the zero $\tC$-module has property (P). Suppose, for now, that the following three conditions hold:
\begin{itemize}
\item[{\bf (C1):}] (P) is $S$-dominant on $\tC \module$.

\item[{\bf (C2):}] (P) is $D$-predominant on $\tC \module$.

\item[{\bf (C3):}] In every short exact sequence $0\to V' \to V\to V'' \to 0$ of finitely generated $\tC$-modules, if $V'$ and $V''$ have property (P), then $V$ has property (P).
\end{itemize}
To show that every finitely generated $\tC$-module $V$ has property (P), we proceed by induction on the generating degree $\gd(V)$ by following the steps outlined below.

\medskip

\textit{Step 1:} Suppose, first, that $KV=0$. Since $\gd(DV) < \gd(V)$ by Statement (2) of Lemma \ref{main technical lemma}, by the induction hypothesis, $DV$ has property (P), and hence by condition (C2), $V$ has property (P).

\medskip

\textit{Step 2:} If $KV\neq 0$, then we construct an increasing chain
\begin{equation} \label{increasing chain}
0 = U^0 \subseteq U^1 \subseteq U^2 \subseteq U^3 \subseteq \ldots
\end{equation}
of $\tC$-submodules of $V$ such that:
\begin{itemize}
 \item $U^1$ is the kernel of $V\to SV$,
 \item $U^2/U^1$ is the kernel of $V/U^1 \to S(V/U^1)$,
 \item $U^3/U^2$ is the kernel of $V/U^2 \to S(V/U^2)$,
 \item and so on.
\end{itemize}
Let
\begin{equation*}
U = \bigcup_{n\geqslant 0} U^n.
\end{equation*}
We show that $K(V/U)=0$. Since $\gd(V/U)\leqslant \gd(V)$, it follows by the induction hypothesis or by Step 1 that $V/U$ has property (P).

\medskip

\textit{Step 3:} We prove that, under certain conditions, the increasing chain \eqref{increasing chain} must eventually stabilize, i.e. for some $n$, one has $U^n= U^{n+1} = U^{n+2} = \cdots$, and so $V/U^n = V/U$. Thus, from Step 2, we know that $V/U^n$ has property (P).

\medskip

\textit{Step 4:} For each $i\geqslant 1$, we have the short exact sequence
\begin{equation*}
0 \to V/U^i \to S(V/U^{i-1}) \to D(V/U^{i-1}) \to 0.
\end{equation*}
Note that $\gd(D(V/U^{i-1}))<\gd(V/U^{i-1})$ by Lemma \ref{main technical lemma} and $\gd(V/U^{i-1}) \leqslant \gd(V)$ since $V/U^{i-1}$ is a quotient module of $V$. By the induction hypothesis, we know that $D(V/U^{i-1})$ has the property (P). If $V/U^i$ has the property (P), then by condition (C3), so does $S(V/U^{i-1})$, and hence by condition (C1), so does $V/U^{i-1}$. Since $V/U^n$ has the property (P) by Step 3, we may do a downward induction to deduce that $V/U^i$ has property (P) for every $i\leqslant n$. In particular, $V=V/U^0$ has the property (P).

\subsection{Main results}

Our first main result generalizes \cite[Theorem A]{CEFN} by Church, Ellenberg, Farb, and Nagpal for $\FI$.

\begin{theorem}[Noetherianity] \label{noetherianity}
Suppose that $k$ is a commutative Noetherian ring, and $\tC$ has a generic shift functor $S$. Then every finitely generated $\tC$-module is Noetherian if and only if $S M(s)$ is finitely presented for every $s \geqslant 0$.
\end{theorem}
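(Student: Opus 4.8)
The plan is to establish the two implications separately. The forward implication is immediate: if every finitely generated $\tC$-module is Noetherian, then for any $s \geqslant 0$ we have $SM(s) \in \tC\module$ by Lemma~\ref{main technical lemma}, so $SM(s)$ is finitely generated; choosing a finitely generated projective $P^0$ and a surjection $P^0 \to SM(s)$, the module $P^0$ is Noetherian by hypothesis, hence the kernel of $P^0 \to SM(s)$ is finitely generated, and therefore $SM(s)$ is finitely presented (iterating on syzygies, it is in fact super finitely presented). All the content is in the reverse implication.

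So suppose $SM(s)$ is finitely presented for every $s \geqslant 0$. I would run the inductive machinery with the property (P) taken to be ``is a Noetherian $\tC$-module'' (the zero module has (P), as required). Condition (C3) is just the classical fact that Noetherian modules are closed under extensions. For condition (C1), that (P) is $S$-dominant on $\tC\module$, I would argue by hand: for $V \in \tC\module$ the $k$-module $V_0$ is finitely generated, hence Noetherian since $k$ is Noetherian, so given an ascending chain $A^1 \subseteq A^2 \subseteq \cdots$ of $\tC$-submodules of $V$, the chain $SA^1 \subseteq SA^2 \subseteq \cdots$ in $SV$ stabilizes and the chain $(A^1)_0 \subseteq (A^2)_0 \subseteq \cdots$ in $V_0$ stabilizes; since $(SA^i)_r = (A^i)_{r+1}$ for all $r \geqslant 0$, this forces the chain $A^1 \subseteq A^2 \subseteq \cdots$ to stabilize in every degree, hence as a chain of submodules. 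Thus $SV$ Noetherian implies $V$ Noetherian, which is exactly (C1).

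The substance is in condition (C2) --- that (P) is $D$-predominant on $\tC\module$ --- together with the stabilization of the chain \eqref{increasing chain} in Step 3 of the machinery, and this is where the hypothesis on the modules $SM(s)$ gets used. From $0 \to M(s) \to SM(s) \to DM(s) \to 0$, with $M(s)$ finitely generated and $SM(s)$ finitely presented, one obtains that $DM(s)$ is finitely presented, while $\gd(DM(s)) \leqslant s-1$ by the definition of a generic shift functor. For a general $V \in \tC\module$ with $KV = 0$ and $DV$ Noetherian, I would take a finitely generated projective presentation $0 \to L \to P \to V \to 0$ with $P$ generated in degrees $\leqslant \gd(V)$, apply the snake lemma relative to $\mu$ (so $KP = 0$ and $KV = 0$ give $KL = 0$ and a short exact sequence $0 \to DL \to DP \to DV \to 0$ in which $DP = \bigoplus_t DM(t)^{\oplus m_t}$ is finitely presented), and then transport the resulting finiteness of syzygies back along \eqref{key sequence}, with the argument arranged as an induction on $\gd(V)$ so that modules of strictly smaller generating degree (such as the $DM(t)$ and the $DV$'s) may be treated as already Noetherian. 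For Step 3, once one knows the submodules $U^n$ of \eqref{increasing chain} are all generated within a single bounded range of degrees, the chain stabilizes degree by degree --- each $V_r$ being a Noetherian $k$-module --- and hence stabilizes outright.

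The main obstacle will be condition (C2): one cannot prove ``$KV = 0$ and $DV$ Noetherian $\Rightarrow V$ Noetherian'' in a single step, because the exact sequence $0 \to V \to SV \to DV \to 0$ coming from \eqref{key sequence} only shows that $V$ is Noetherian if and only if $SV$ is, once $DV$ is known to be Noetherian --- which is circular. Breaking this circle is precisely what the finite presentation of the modules $SM(s)$ (hence of the $DM(s)$, and, inductively, of sufficiently many syzygies) is for, and making the generating-degree bookkeeping consistent so that the induction closes up is the delicate point.
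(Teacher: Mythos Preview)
Your overall architecture matches the paper's, but your choice of (P) is the wrong one, and this causes the two gaps you yourself flag to remain open. The paper takes (P) to be \emph{finitely presented}, not Noetherian, and then proves that every finitely generated module has (P); this is equivalent to Noetherianity globally, but not step by step in the induction on $\gd(V)$. With (P) = Noetherian, your sketch for (C2) does not close: starting from $0\to L\to P\to V\to 0$ and the snake sequence $0\to DL\to DP\to DV\to 0$, the inductive hypothesis makes $DP$ Noetherian, hence $DL$ finitely generated, hence $\gd(L)<\infty$ --- but that only shows $V$ is \emph{finitely presented}, not Noetherian. To get $V$ Noetherian you would need $P$ Noetherian, and $\gd(P)=\gd(V)$ puts $P$ outside the inductive hypothesis; there is no way to bound $\gd(W)$ for an arbitrary submodule $W\subseteq V$ by $\gd(V)$. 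Your acknowledgement that ``breaking this circle is what the finite presentation of $SM(s)$ is for'' is exactly right, but the mechanism is to change (P), not to push harder on syzygies.

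The same mischoice blocks Step~3. You assert that the $U^n$ are generated in a uniformly bounded range of degrees, but nothing in your argument produces such a bound. In the paper this bound is exactly the payoff of (P) = finitely presented: once $V_{\reg}$ is shown to be finitely presented, the long exact sequence in homology attached to $0\to V_{\sin}\to V\to V_{\reg}\to 0$ gives $\gd(V_{\sin})\leqslant\max\{\gd(V),\hd_1(V_{\reg})\}<\infty$, and then Lemma~\ref{finite filtration} forces $U^n=V_{\sin}$ for some $n$. If instead you only knew $V_{\reg}$ Noetherian, you could not deduce $\hd_1(V_{\reg})<\infty$ without already knowing $P$ is Noetherian. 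So the fix is simple and local: run your same induction with (P) = finitely presented; your (C2) sketch then becomes a correct proof (it is essentially Corollary~\ref{n finitely presented}), and it simultaneously supplies the finiteness of $\hd_1(V_{\reg})$ needed for Step~3. Your elementary argument for (C1) with (P) = Noetherian is fine but then unused; for (P) = finitely presented, (C1) is part of Corollary~\ref{n finitely presented}.
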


\begin{remark} \normalfont
It is well known that the following statements are equivalent:
\begin{enumerate}
\item every finitely generated module is super finitely presented;
\item every finitely generated module is finitely presented.
\item every finitely generated module is Noetherian.
\end{enumerate}
We prove Theorem \ref{noetherianity} by applying the above induction argument, taking (P) to be the finitely presented property.
\end{remark}

\begin{remark} \normalfont
Of course, categories equipped with generic shift functors are relatively rare. However, since it is well known that quotients of left Noetherian rings are still left Noetherian, we can deduce the following result: \emph{Let $\tC$ and $\tC'$ be two small $k$-linear categories and suppose that there is a full functor $\pi: \tC \to \tC'$ such that $\pi$ is a bijection on objects. If $\tC$ is locally Noetherian, so is $\tC'$.}

For example, let us consider the skeletal category $\C$ of $\FI$ whose objects are parameterized by nonnegative integers. Take a fixed integer $s \geqslant 3$. One can define a category $\C'$ which has the same objects as $\C$. Morphisms in $\C'$ are defined as follows: for two objects $r, t$, $\C'(r, t)$ is empty if $r > t$; is the same as $\C (r, t)$ if $t \geqslant r > s$ or $s > t \geqslant r$; and has only one element if $r \leqslant s \leqslant t$. Intuitively, $\C'$ is obtained from $\C$ by identifying those morphisms starting from an object less than or equal to $s$ and ending at another object greater than or equal to $s$.

Let $\tC$ and $\tC'$ be the $k$-linearizations of $\C$ and $\C'$ respectively, where $k$ is a commutative Noetherian ring. The category $\tC' \Module$ does not have a generic shift functor as the group $\C'(s-1, s-1) \cong \C(s-1, s-1)$ can not be embedded into the trivial group $\C'(s, s)$. However, there is a natural full functor $\pi: \tC \to \tC'$ which is the identity map on objects. Therefore, we can deduce the locally Noetherian property of $\tC'$ from the corresponding result of $\tC$.
\end{remark}

The second main result of this paper summarizes the inductive machinery in a form convenient for applications.

\begin{theorem}[Inductive machinery] \label{main theorem}
Suppose that $k$ is a commutative Noetherian ring and $\tC$ has a generic shift functor $S$ such that $S M(s)$ is finitely presented for every $s \geqslant 0$. Let (P) be a property of some $\tC$-modules and suppose that the zero $\tC$-module has property (P). Then every finitely generated $\tC$-module has property (P) if and only if conditions (C1), (C2), and (C3) hold.
\end{theorem}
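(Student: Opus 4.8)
The plan is to prove the two implications separately, the forward one being formal. Suppose every finitely generated $\tC$-module has property (P). By Lemma \ref{main technical lemma}, $S$ and $D$ restrict to endofunctors of $\tC\module$, so conditions (C1)--(C3) are meaningful, and each of them is an implication whose conclusion merely asserts that some finitely generated $\tC$-module has property (P); since that is assumed, all three conditions hold automatically. This gives the ``only if'' direction.

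For the ``if'' direction, assume (C1), (C2) and (C3). The hypothesis that $SM(s)$ is finitely presented for every $s\geqslant 0$ will be used exactly once: by Theorem \ref{noetherianity} it guarantees that every finitely generated $\tC$-module is Noetherian. I would then prove by induction on $\gd(V)$ (which is $-1$ or a nonnegative integer) that every finitely generated $\tC$-module $V$ has property (P). The base case $\gd(V)=-1$ is the zero module, which has property (P) by hypothesis. For the inductive step I would carry out Steps 1--4 of the previous subsection; the points needing genuine verification are the claim $K(V/U)=0$ in Step 2 and the stabilization of the chain \eqref{increasing chain} in Step 3, everything else being a matter of quoting Lemma \ref{main technical lemma} (for the strict inequalities $\gd(DW)<\gd(W)$ and for $SW,DW$ remaining finitely generated), the induction hypothesis, and conditions (C1)--(C3). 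I would also make explicit that Step 2 splits into two cases: if $\gd(V/U)<\gd(V)$ the induction hypothesis applies directly to $V/U$, while if $\gd(V/U)=\gd(V)$ one applies the argument of Step 1 to $V/U$, which is legitimate because $K(V/U)=0$ and $\gd(D(V/U))<\gd(V/U)=\gd(V)$.

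For $K(V/U)=0$, with $U=\bigcup_{n\geqslant 0}U^n$, I would use that the shift functor $S=\iota^{\ast}$ is computed objectwise through $\iota$ and hence commutes with all colimits, in particular with the sequential colimit $\varinjlim_n V/U^n = V/U$. By exactness of filtered colimits of $k$-modules and naturality of $\mu$, this gives $\mu_{V/U}=\varinjlim_n \mu_{V/U^n}$, whence $K(V/U)=\varinjlim_n K(V/U^n)=\varinjlim_n U^{n+1}/U^n$; but each $U^{n+1}/U^n\subseteq V/U^n$ maps to $0$ under the transition map $V/U^n\to V/U^{n+1}$, so all transition maps in this colimit vanish and the colimit is $0$. (An elementary alternative: any element of $V/U$ lifts to some $V/U^n$, and pushing it forward far enough kills its $\mu$-image, forcing the element into some $K(V/U^m)=U^{m+1}/U^m$ and hence to $0$ in $V/U$.) Then Step 3 is immediate, since $U^0\subseteq U^1\subseteq\cdots$ is an increasing chain of submodules of the Noetherian module $V$; stabilizing at $U=U^n$, Step 2 gives that $V/U^n$ has property (P), and the downward induction of Step 4 — using the short exact sequences $0\to V/U^i\to S(V/U^{i-1})\to D(V/U^{i-1})\to 0$, the inequality $\gd(D(V/U^{i-1}))<\gd(V/U^{i-1})\leqslant\gd(V)$, the induction hypothesis, and conditions (C3) then (C1) — carries property (P) down to $V=V/U^0$. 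The one point requiring real care is the colimit computation behind Step 2; the remainder is bookkeeping resting on the already established Lemma \ref{main technical lemma} and Theorem \ref{noetherianity}.
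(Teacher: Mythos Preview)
Your proposal is correct and follows essentially the same route as the paper. The paper's own proof simply observes that the argument for Theorem~\ref{noetherianity} used only conditions (C1)--(C3) together with the stabilization of the chain $U^0\subseteq U^1\subseteq\cdots$, and then invokes Theorem~\ref{noetherianity} to obtain Noetherianity (hence stabilization via Lemma~\ref{finite filtration}); your write-up unpacks exactly this, and the claim $K(V/U)=0$ that you verify via colimits is precisely Lemma~\ref{singular regular decomposition}, whose elementary proof matches the alternative you sketch in parentheses.
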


By Theorem \ref{noetherianity}, the assumptions in Theorem \ref{main theorem} imply that every finitely generated $\tC$-module is Noetherian. This ensures that we can carry out Step 3 in our induction argument.

\subsection{Applications}

The inductive machinery allows one to prove interesting properties (P) by checking that it satisfies the three conditions specified in Theorem \ref{main theorem}. To illustrate its usefulness, we need to recall a few more definitions.

Define a two-sided ideal $\m$ of $\tC$ (viewed as a $k$-algebra) by:
\begin{equation*}
\m = \bigoplus _{0 \leqslant r < s} \tC (r, s).
\end{equation*}
Let us view a $\tC$-module $V$ as a $k$-module with the direct sum decomposition:
\begin{equation*}
V = \bigoplus _{s \geqslant 0} V_s.
\end{equation*}
Note that $V$ is generated in degrees $\leqslant s$ (or equivalently, there is an epimorphism $P \to V$ where $P$ is a direct sum of $\tC$-modules of the form $M(r)$ with $r \leqslant s$) if and only if the value of $V / \m V$ is 0 on each object $r>s$, so $V / \mathfrak{m} V$ encapsulates information about generators of $V$. One defines the \emph{0-th homology group} by:
\begin{equation*}
H_0(V) = V/\m V \cong \tC/\m \otimes _{\tC} V.
\end{equation*}
Correspondingly, for $i \geqslant 1$, one defines the \emph{$i$-th homology group} using the $i$-th left derived functor of $H_0$, i.e.
\begin{equation*}
H_i(V) = \Tor ^{\tC} _i (\tC/\m, V).
\end{equation*}
Note that these homology groups inherit a natural $\tC$-module structure from $V$. For any $i \geqslant 0$, we define the \emph{$i$-th homological degree} of $V$ to be
\begin{equation*}
\hd_i (V) = \sup \{ s \geqslant 0 \mid \text{ the value } (H_i(V))_s \neq 0 \};
\end{equation*}
we set it to be $-1$ whenever the above set is empty. Note that the zeroth homological degree $\hd_0(V)$ is the generating degree of $V$, i.e. $\hd_0(V) = \gd(V)$. We say that $V$ is \emph{$n$-presented in finite degrees} if $\hd_i(V) < \infty$ for every $i \leqslant n$. We define the \emph{Castelnuovo-Mumford regularity} of $V$ to be
\begin{equation*}
\reg(V) = \sup \{ \hd_i(V) - i \mid i \geqslant 0 \}.
\end{equation*}

\begin{remark} \normalfont
In the literature, different conventions have been used. For instance, in some papers the homological degree is set to be $-\infty$ when the corresponded homology group is trivial. Also, in \cite{CE, L2, LR}, to define the Castelnuovo-Mumford regularity, the index $i$ is required to be strictly positive. Therefore, one has $\reg(M(s)) = s$ in the current setting, while according to the definition in some other papers, the regularity of $M(s)$ is $-\infty$. The reason we choose a different convention here is to simplify the statements of many results and their proofs.
\end{remark}

\begin{remark} \normalfont
The homological degrees of a $\tC$-module are closely related to properties such as finitely generated, finitely presented, etc. For instance, when $k$ is Noetherian, one easily observes that a locally finite $\tC$-module $V$ is $n$-finitely presented if and only if it is $n$-presented in finite degrees. In particular, if $V$ has finite Castelnuovo-Mumford regularity, then it is $n$-finitely presented for every $n \geqslant 0$.
\end{remark}

We now describe a few applications of Theorem \ref{main theorem}.

\subsubsection{Castelnuovo-Mumford regularity.}
The following corollary will be proved by taking (P) in Theorem \ref{main theorem} to be the property of having finite Castelnuovo-Mumford regularity.

\begin{corollary} \label{cm regularity}
Suppose that $k$ is a commutative Noetherian ring and $\tC$ has a generic shift functor $S$. If $\reg (S M(s)) \leqslant s$ for all $s \geqslant 0$, then $\reg(V) < \infty$ for every finitely generated $\tC$-module $V$.
\end{corollary}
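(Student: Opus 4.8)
The plan is to invoke Theorem~\ref{main theorem} with (P) taken to be the property ``$\reg(V) < \infty$''. Before checking the three conditions, note that the hypothesis $\reg(SM(s)) \leqslant s$ makes each $SM(s)$ a module of finite Castelnuovo--Mumford regularity; by the remarks preceding this corollary such a module is $n$-presented in finite degrees for every $n$, hence (as $k$ is Noetherian) $n$-finitely presented for every $n$, in particular finitely presented. Thus the hypotheses of Theorem~\ref{main theorem} hold, and, in addition, every finitely generated $\tC$-module is Noetherian, hence super finitely presented, so $\hd_i(V) < \infty$ for all $i$; consequently, for finitely generated $V$, ``$\reg(V) < \infty$'' says exactly that the supremum $\sup_{i \geqslant 0}\{\hd_i(V) - i\}$ is finite. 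Since $\reg(0) = -1$, it remains to verify conditions (C1), (C2) and (C3) for this property.

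Condition (C3) is immediate from the long exact sequence in $\tC$-homology: for $0 \to V' \to V \to V'' \to 0$ one gets $\hd_i(V) \leqslant \max\{\hd_i(V'), \hd_i(V'')\}$ for each $i$, hence $\reg(V) \leqslant \max\{\reg(V'), \reg(V'')\}$. For (C1) and (C2) the central device is the four-term exact sequence \eqref{key sequence}, which I would split into the two short exact sequences $0 \to KV \to V \to V/KV \to 0$ and $0 \to V/KV \to SV \to DV \to 0$. Chasing the associated long exact sequences in homology gives, for every $i \geqslant 0$,
\begin{equation*}
\hd_i(V) \;\leqslant\; \max\bigl\{\hd_i(KV),\ \hd_i(SV),\ \hd_{i+1}(DV)\bigr\},
\end{equation*}
whence the ``master inequality'' $\reg(V) \leqslant \max\{\reg(KV),\ \reg(SV),\ \reg(DV) + 1\}$.

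To feed in the hypothesis on the modules $SM(s)$, I would next show that $\reg(SV) \leqslant \reg(V)$ for every finitely generated $V$. Applying the exact functor $S$ to a minimal projective resolution $P^{\bullet} \to V$ yields a resolution $SP^{\bullet} \to SV$ whose term in homological degree $p$ is a finite direct sum of modules $SM(t)$ with $t \leqslant \hd_p(V)$; passing to the total complex of a Cartan--Eilenberg resolution of $SP^{\bullet}$ and using that $\reg(SM(t)) \leqslant t$ is equivalent to $\hd_q(SM(t)) \leqslant t + q$ for all $q$, one obtains $\hd_n(SV) - n \leqslant \max_{p \leqslant n}\{\hd_p(V) - p\} \leqslant \reg(V)$. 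Combining the master inequality with this bound, one proves (C1) and (C2) by induction on $\gd(V)$, using $\gd(DV) < \gd(V)$ from Lemma~\ref{main technical lemma}. In the situation of (C2), where $KV = 0$, the term $\reg(KV)$ is absent and the induction hypothesis controls $\reg(DV)$; the remaining difficulty is the term $\reg(SV)$, which one handles by iterating $S$ and using that naturality of $\mu$ forces the successive kernels $K(S^{j}V)$ to embed into modules of strictly smaller generating degree (already $K(SV) \hookrightarrow DV$), so that the recursion terminates. The argument for (C1) is similar once one has bounded $\reg(KV)$ by invariants of strictly smaller generating degree.

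The step I expect to be the genuine obstacle is this last point: because $\reg(SV)$ reappears on the right-hand side of the master inequality, the inequalities alone do not close the recursion, and one really has to prove that finiteness of regularity is $D$-\emph{pre}dominant and $S$-\emph{dominant} --- i.e., that it propagates ``downward'' along $S$ and $D$ --- which forces a careful analysis of the kernels $KV$ and of how regularity interacts with iterates of the shift, rather than the easy subadditivity used for (C3). Once (C1)--(C3) are in place, Theorem~\ref{main theorem} delivers $\reg(V) < \infty$ for every finitely generated $\tC$-module $V$.
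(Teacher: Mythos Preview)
Your overall plan---invoke Theorem~\ref{main theorem} with (P) the finite-regularity property, verify the hypotheses on $SM(s)$, and check (C1)--(C3)---is exactly the paper's strategy, and your treatment of (C3) and of the hypotheses of Theorem~\ref{main theorem} is fine. The gap is in (C1) and (C2), and you have correctly identified where it lies: the ``master inequality''
\[
\reg(V)\;\leqslant\;\max\{\reg(KV),\,\reg(SV),\,\reg(DV)+1\}
\]
places $\reg(SV)$ on the \emph{right}, while the only bound you have proved, $\reg(SV)\leqslant\reg(V)$, points the wrong way. Substituting it back yields the tautology $\reg(V)\leqslant\max\{\reg(KV),\,\reg(V),\,\reg(DV)+1\}$, so the recursion does not close. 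Your proposed rescue---iterate $S$ and use $K(SV)\hookrightarrow K(DV)$---does not terminate either: $\gd(S^jV)$ need not drop, so after any number of iterations you still face an uncontrolled $\reg(S^nV)$. For (C1) there is an additional problem: you assert that $\reg(KV)$ is bounded by invariants of strictly smaller generating degree, but nothing in the setup forces $\gd(KV)<\gd(V)$ (the $\OI$-example in the paper shows $KV$ can be large), so this step has no content.

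The paper avoids the circularity by \emph{not} using your master inequality at all. For (C2) it works with an adaptable resolution $P^\bullet\to V$: since $KV=0$ and each $KZ^i=0$, applying $D$ yields genuine short exact sequences $0\to DZ^{i+1}\to DP^i\to DZ^i\to 0$, and one proves by induction on $i$ that $\gd(Z^i)\leqslant \reg(DV)+i+1$, using only $\reg(DP^i)\leqslant\reg(SP^i)\leqslant\gd(P^i)$ from the hypothesis on $SM(s)$. This gives $\reg(V)\leqslant\reg(DV)+1$ directly, with no appearance of $\reg(SV)$. For (C1) the paper proves, by induction on $i$ via the first syzygy $0\to W\to P\to V\to 0$, the pointwise inequality
\[
\hd_i(V)\;\leqslant\;\max\Bigl(\{\hd_j(V)+i-j:\,0\leqslant j\leqslant i-1\}\cup\{\hd_i(SV)+1\}\Bigr),
\]
which bootstraps (again by induction on $i$) to $\reg(V)\leqslant\reg(SV)+1$. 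The key point is that the induction is on the homological index $i$, not on $\gd(V)$, so there is no circular dependence on $\reg(SV)$ or $\reg(V)$. Your Cartan--Eilenberg argument is essentially the paper's proof of the companion inequality $\reg(SV)\leqslant\reg(V)$ (the $S$-\emph{invariant} direction), which is correct but is not what (C1) asks for.
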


The proof of Corollary \ref{cm regularity} will be given in Section \ref{CM regularity section}.

\begin{remark} \normalfont
In \cite[Theorem A]{CE}, Church and Ellenberg proved a stronger result for $\FI$-modules by giving an upper bound for the regularity. We are not able to give such an upper bound in Corollary \ref{cm regularity}. The main reason is that for an $\FI$-module $V$, the kernel $KV$ of the natural map $V \to SV$ has a simple description and one can give an upper bound for $\reg(KV)$ (see, for example, \cite[Corollary 1.16]{L1}), but we do not have a similar simple description of $KV$ in the general case.
\end{remark}

\subsubsection{Polynomial growth.}
The following corollary extends (\cite[Theorem C]{CEFN}) to other categories.

\begin{corollary} \label{polynomial growth}
Suppose that $k$ is a field and $\tC$ has a generic shift functor $S$ such that $S M(s)$ is finitely presented for every $s \geqslant 0$. Then for any finitely generated $\tC$-module $V$, the Hilbert function
\begin{equation*}
H_V: \mathbb{N} \to \mathbb{N}, \quad n \mapsto \dim_k V_n
\end{equation*}
coincides with a rational polynomial with degree not exceeding $\gd(V)$ when $n \gg 0$.
\end{corollary}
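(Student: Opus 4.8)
The plan is to deduce Corollary~\ref{polynomial growth} from Theorem~\ref{main theorem} in two steps: first use the inductive machinery to show that $H_V$ is eventually polynomial for every finitely generated $\tC$-module $V$, and then extract the degree bound $\deg \leqslant \gd(V)$ afterwards. For the first step, let (P) be the property ``$H_V$ agrees with a polynomial for $n \gg 0$'' (such a polynomial is automatically rational, being integer-valued on an infinite set); the zero module has (P). Everything below rests on the identity $H_{SV}(n) = H_V(n+1)$, valid because $S = \iota^{\ast}$ with $\iota(n) = n+1$ forces $(SV)_n = V_{n+1}$.

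Now I would check the hypotheses of Theorem~\ref{main theorem}. (C1), $S$-dominance: $H_{SV}$ is a variable-shift of $H_V$, so one is eventually polynomial exactly when the other is. (C2), $D$-predominance: if $KV = 0$, then \eqref{key sequence} reduces to a short exact sequence $0 \to V \to SV \to DV \to 0$, and additivity of $\dim_k$ over short exact sequences of vector spaces gives $H_V(n+1) - H_V(n) = H_{DV}(n)$; so if $DV$ has (P), then the forward difference of $H_V$ is eventually polynomial, hence so is $H_V$ itself (a function whose forward difference is eventually polynomial is eventually polynomial). (C3): in a short exact sequence $0 \to V' \to V \to V'' \to 0$ one has $H_V = H_{V'} + H_{V''}$, and a sum of eventually polynomial functions is eventually polynomial. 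Since a field is Noetherian and the corollary assumes $SM(s)$ finitely presented for all $s$, Theorem~\ref{main theorem} applies and shows that $H_V$ is eventually polynomial for every finitely generated $\tC$-module.

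For the degree bound I would first prove $\deg H_{M(s)} \leqslant s$ by induction on $s$. When $s = 0$, the cokernel $DM(0)$ of $\mu_{M(0)}$ is generated in degrees $\leqslant -1$, hence is $0$, so $\mu_{M(0)}$ is an isomorphism and $H_{M(0)}(n) = \dim_k \tC(0,n)$ is constant. For $s \geqslant 1$, the module $DM(s)$ is finitely generated (a quotient of $SM(s)$) and generated in degrees $\leqslant s-1$, hence a quotient of a finite direct sum of modules $M(t)$ with $t \leqslant s-1$; so $H_{DM(s)}(n) \leqslant \sum_t m_t H_{M(t)}(n)$ for all $n$, and by the inductive hypothesis together with the first step and the elementary fact that, if $0 \leqslant f(n) \leqslant g(n)$ for $n \gg 0$ and $g$ agrees with a polynomial of degree $d$ for $n \gg 0$, then any polynomial agreeing with $f$ for $n \gg 0$ has degree $\leqslant d$, we conclude $\deg H_{DM(s)} \leqslant s-1$. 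Now the short exact sequence $0 \to M(s) \to SM(s) \to DM(s) \to 0$ gives $H_{M(s)}(n+1) - H_{M(s)}(n) = H_{DM(s)}(n)$, and forming an antidifference raises the degree by at most one, so $\deg H_{M(s)} \leqslant s$. Finally, for an arbitrary finitely generated $V$, a surjection $\bigoplus_{s \leqslant \gd(V)} M(s)^{\oplus m_s} \twoheadrightarrow V$ yields $0 \leqslant H_V(n) \leqslant \sum_{s\leqslant \gd(V)} m_s H_{M(s)}(n)$ for all $n$, and the same comparison fact (using the first step applied to $V$) gives $\deg H_V \leqslant \gd(V)$.

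The substantive part is the appeal to the machinery; the degree bound is the technically fiddly part. The point I would emphasize is that the degree bound cannot simply be built into (P): to verify (C3) for the property ``$H_V$ is eventually a polynomial of degree $\leqslant \gd(V)$'', one would need to bound $\deg H_{V'}$ by $\gd(V)$ for a submodule $V' \subseteq V$, but a submodule of a module generated in low degrees may need generators in arbitrarily high degrees (e.g.\ $V' = \m^{j} V$), so no such bound is available from the hypotheses of (C3) without essentially assuming the conclusion. This is what dictates the two-step structure, with the degree bound read off afterwards from the key sequences for the $M(s)$, using that $DM(s)$ is generated in degrees $\leqslant s-1$ and is---by the first step---already known to have an eventually polynomial Hilbert function.
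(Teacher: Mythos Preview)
Your proof is correct. The first step---applying Theorem~\ref{main theorem} with (P) taken to be eventual polynomiality \emph{without} the degree bound---is exactly what the paper's detailed proof in Section~4 does. Your observation that one cannot simply fold the degree bound into (P) because (C3) then becomes problematic is well taken: the paper's sketch in the introduction does attempt this, but the detailed proof abandons it and, like you, establishes the degree bound separately.

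Where you diverge is in how the degree bound is obtained. The paper re-enters its own machinery: by induction on $\gd(V)$ it uses the filtration $0=U^0\subset\cdots\subset U^n=V_{\sin}$ from Section~3, first bounding $\deg H_{V_{\reg}}$ via the sequence $0\to V_{\reg}\to SV_{\reg}\to DV_{\reg}\to 0$ and the inductive hypothesis on $DV_{\reg}$, then descending through the sequences $0\to V/U^i\to S(V/U^{i-1})\to D(V/U^{i-1})\to 0$ down to $V=V/U^0$. Your route is instead a direct squeeze: prove $\deg H_{M(s)}\leqslant s$ by induction on $s$ using only the defining inequality $\gd(DM(s))\leqslant s-1$, and then bound $H_V$ pointwise above by $H_P$ for a surjecting projective $P$ with $\gd(P)=\gd(V)$. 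Your argument treats Theorem~\ref{main theorem} as a black box and avoids reopening the $U^i$ filtration; the paper's argument stays within its inductive framework. Both are valid, and yours is arguably the cleaner way to extract this particular corollary.
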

\begin{proof}[Sketch of proof]
For any $\tC$-module $V$, let (P) be the property that there exists a polynomial $Q(t)\in \mathbb{Q}[t]$ with $\deg(Q)\leqslant \gd(V)$ such that $\dim_k V_n = Q(n)$ for all $n \gg 0$. It is easy to see that (P) is $S$-dominant on $\tC\module$ and $D$-predominant on $\tC\module$. Moreover, if two terms in a short exact sequence have this property, so does the third term. Hence, by Theorem \ref{main theorem}, every finitely generated $\tC$-module has the property (P).
\end{proof}

\subsubsection{$\sharp$-filtered $\FI$-modules.} Let (P) be the following property of finitely generated $\FI$-modules $V$ over a commutative Noetherian ring: $S^n V$ is a $\sharp$-filtered module (see \cite{N} for a definition) for $n \gg 0$. Clearly (P) satisfies conditions (C1) and (C3). Moreover, Lemma 3.12 in \cite{LY} asserts that property (P) satisfies condition (C2). Consequently, we deduce that \emph{every finitely generated $\FI$-module over a commutative Noetherian ring has (P), a result first proved by Nagpal \cite[Theorem A]{N}.}

Let us mention here a property which does not fulfill the conditions in Theorem \ref{main theorem}. For $\FI$-modules over a commutative ring, let (P) be the property of having finite projective dimension. Clearly, (P) satisfies condition (C3). In \cite{LY}, Li and Yu showed that it is $D$-predominant, and is $S$-invariant. But the finite projective dimension of $SV$ does not imply the finite projective dimension of $V$, so (P) is not $S$-dominant. It is well known that there are finitely generated $\FI$-modules with infinite projective dimension.

\subsection{$\OI_G$-modules} Let $G$ be a finite group. The category $\C = \OI_G$ has objects the nonnegative integers. For a pair of objects $r, s \geqslant 0$, $\C (r, s)$ is the set of pairs $(f, g)$ where $f: \{1, \ldots, r \} \to \{1, \ldots, s \}$ is a strictly increasing map and $g: \{ 1, \ldots, r\} \to G$ is an arbitrary map. For $(f_1, g_1) \in \C(r, s)$ and $(f_2, g_2)\in \C (s, t)$, their composition is $(f_3, g_3)$ where
\begin{equation*}
f_3 = f_2 \circ f_1, \quad \text{and } g_3 (i)= g_2 (f_1(i)) \cdot g_1(i)
\end{equation*}
for $1 \leqslant i \leqslant r$. Let $\tC$ be the $k$-linearization of $\C$.

In \cite{GL}, we constructed a generic shift functor $S$ for $\tC$-modules with the property that:
\begin{equation*}
S M(s) \cong M(s) \oplus M(s-1)^{\oplus |G|}, \quad \mbox{ for each } s \geqslant 0.
\end{equation*}
Consequently, one immediately deduces the following results first proved by Sam and Snowden in \cite{SS2}: \emph{every finitely generated $\OI_G$-module over a commutative Noetherian ring is Noetherian, and the Hilbert function of any finitely generated $\OI_G$-module over  a field is eventually polynomial with rational coefficients.}

The following result, to the best of our knowledge, is new:

\begin{corollary}
Suppose that $k$ is a commutative Noetherian ring. If $V$ is a finitely generated $\OI_G$-module, then $\reg(V)<\infty$.
\end{corollary}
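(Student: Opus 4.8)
The plan is to obtain this as an immediate application of Corollary \ref{cm regularity}. Recall from \cite{GL} (as stated above) that $\tC = \OI_G$ carries a generic shift functor $S$ with
\[
S M(s) \cong M(s) \oplus M(s-1)^{\oplus |G|} \qquad \text{for every } s \geqslant 0,
\]
where $M(-1)$ is understood to be the zero module. By Corollary \ref{cm regularity} it therefore suffices to check the single hypothesis $\reg(S M(s)) \leqslant s$ for all $s \geqslant 0$, and this will reduce to computing the regularity of the representable modules $M(s)$.

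First I would record that $\reg(M(s)) = s$ for every $s \geqslant 0$ (as already noted in the remark on conventions following the definition of $\reg$). Indeed, $M(s)$ is projective, so $H_i(M(s)) = \Tor_i^{\tC}(\tC/\m, M(s)) = 0$ for $i \geqslant 1$ and hence $\hd_i(M(s)) = -1$ for $i \geqslant 1$. For $i = 0$ we have $H_0(M(s)) = M(s)/\m M(s)$; since $\tC$ is directed, $(\m M(s))_s = 0$, while $(\m M(s))_r = M(s)_r$ for $r > s$ because $M(s)$ is generated by $e_s \in \tC(s,s)$, so $H_0(M(s))$ is concentrated in degree $s$ with $H_0(M(s))_s = \tC(s,s) \neq 0$; thus $\hd_0(M(s)) = s$. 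Consequently $\reg(M(s)) = \sup\{\hd_i(M(s)) - i : i \geqslant 0\} = s$, the supremum being attained at $i = 0$.

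Next I would use that homology commutes with finite direct sums --- $H_i(U \oplus W) \cong H_i(U) \oplus H_i(W)$, since $\Tor$ does --- so that $\hd_i(U \oplus W) = \max(\hd_i(U), \hd_i(W))$ and hence $\reg(U \oplus W) = \max(\reg(U), \reg(W))$, and similarly for any finite direct sum. Applying this to the displayed decomposition, for $s \geqslant 1$ we get $\reg(S M(s)) = \max(\reg(M(s)), \reg(M(s-1))) = \max(s, s-1) = s$, while for $s = 0$ we have $S M(0) \cong M(0)$, so $\reg(S M(0)) = 0$. In every case $\reg(S M(s)) \leqslant s$, and Corollary \ref{cm regularity} then yields $\reg(V) < \infty$ for every finitely generated $\OI_G$-module $V$.

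I do not expect any real obstacle: the substance has all been absorbed into Corollary \ref{cm regularity} and into the construction of the generic shift functor for $\OI_G$ in \cite{GL}. The only point requiring a little attention is the degenerate case $s = 0$, where one must read $M(-1)$ as $0$ so that both the formula for $S M(s)$ and the regularity bound stay valid.
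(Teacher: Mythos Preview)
Your proof is correct and takes the same approach as the paper, which simply states ``Immediate from Corollary \ref{cm regularity}.'' You have just spelled out the verification of the hypothesis $\reg(SM(s)) \leqslant s$ in more detail than the paper does, which is fine since the paper already records the decomposition $SM(s) \cong M(s) \oplus M(s-1)^{\oplus |G|}$ and the fact $\reg(M(s)) = s$ in the surrounding text.
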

\begin{proof}
Immediate from Corollary \ref{cm regularity}.
\end{proof}

\subsection{Further problems}
If $V$ is a finitely generated $\OI_G$-module, is $S^n V$ a $\sharp$-filtered module (defined in a similar way as $\FI_G$-modules) when $n \gg 0$? To prove this, it suffices to show that the property of being $\sharp$-filtered after applying $S$ enough times is $D$-predominant since conditions (C1) and (C3) hold obviously. Unfortunately, at this moment we do not know whether condition (C2) holds. Actually, the proof of this fact for $\FI_G$-modules uses the following key observation: $S$ and $D$ commute for $\FI_G$-modules. However, this does not hold for $\OI_G$-modules; see Remark \ref{example} for more details.

In \cite{G, SS2}, Gadish, Sam and Snowden considered some combinatorial categories closely related to $\FI$, including $\FI^r$, $\FI_d$, $\FIM$, $\VI$, etc. Our methodology cannot apply to all of them immediately. For instance, if $\tC$ is the $k$-linearization of $\FI_d$ with $d > 1$, the Hilbert functions of $M(s)$ for $s \geqslant 1$ may not be eventually polynomial, so there is no \emph{generic} shift functor for the category of $\FI_d$-modules (cf. Corollary \ref{polynomial growth}). However, in \cite{GL}, we have shown that $\FI_d$ have a natural self-embedding functor which induces a shift functor $S$ such that $S M(s) \cong M(s)^{\oplus d} \oplus M(s-1)^{\oplus s}$. From this observation, for a general $\FI_d$-module $V$, one may construct an exact sequence
\begin{equation*}
0 \to KV \to V^{\oplus d} \to SV \to DV \to 0
\end{equation*}
with $\gd(DV) < \gd(V)$. One can also construct a similar exact sequence for representations of $\FIM$. Based on these observations, it is natural to ask if a suitable modification of our inductive machinery can be applied to these examples.

\subsection{An inductive machinery without shift functor}

The main reason motivating us to focus on shift functors is that these functors are well known and have played a central role in representation stability theory. However, for readers interested in general representation theory, as the referee kindly pointed out, the inductive strategy described in this paper does not depend on the existence of a shift functor; instead, it works for general functors sharing similar properties with shift functors.

Explicitly, suppose that there is a functor $F: \tC \Module \to \tC \Module$ satisfying the following axioms:
\begin{itemize}
\item there is a natural transformation $\mu: F \to \mathrm{Id}$, where $\mathrm{Id}$ is the identity functor on $\tC \Module$;
\item for each $s \geqslant 0$, the natural map $\mu_{M(s)}: M(s) \to FM(s)$ is injective;
\item there is a positive number $N$ such that $\gd(V) - N \leqslant \gd(FV/ \mu_V(V)) < \gd(V)$ for all $\tC$-modules $V$.
\end{itemize}
Then with small modifications, one can prove similar versions of Theorems \ref{noetherianity} and \ref{main theorem}, and Corollary \ref{polynomial growth}. To show Corollary \ref{cm regularity}, one can impose the following two extra conditions:
\begin{itemize}
\item $\gd(V) - 1 = \gd(FV/ \mu_V(V))$ for all nonzero $\tC$-modules $V$;
\item $\reg(FM(s)) \leqslant s$.
\end{itemize}

\section{Preliminaries}

Suppose that $\tC$ has a generic shift functor $S$. Recall that for a $\tC$-module $V$ we have an exact sequence:
\begin{equation*}
0 \to KV \to V \to SV \to DV \to 0.
\end{equation*}
Moreover, a $\tC$-module homomorphism $\varphi: V \to W$ induces the following commutative diagram
\begin{equation*}
\xymatrix{
0 \ar[r] & KV \ar[r] \ar[d]^{K \varphi} & V \ar[r] \ar[d]^{\varphi} & SV \ar[r] \ar[d]^{S\varphi} & DV \ar[r] \ar[d]^{D \varphi} & 0 \\
0 \ar[r] & KW \ar[r] & W \ar[r] & SW \ar[r] & DW \ar[r] & 0.
}
\end{equation*}

We will need the following notion.

\begin{definition} \label{adaptable projective resolution}
A projective resolution $\ldots \to P^2 \to P^1 \to P^0 \to V \to 0$ is \emph{adaptable} if the following two conditions are satisfied:
\begin{itemize}
\item each $P^i$ is of the form $\bigoplus_{s\geqslant 0} M(s)^{\oplus m_s}$ where $0\leqslant m_s \leqslant \infty$;
\item $\gd(P^i) = \gd(Z^i)$ for $i \geqslant 0$, where $Z^0=V$, $Z^1$ is the kernel of $P^0 \to V$, and $Z^i$ is the kernel of $P^{i-1} \to P^{i-2}$ for $i \geqslant 2$.
\end{itemize}
\end{definition}

In the next lemma, we collect some important preliminary results which will be used intensively.

\begin{lemma} \label{main technical lemma}
Let $S$ be a generic shift functor and $V$ be a $\tC$-module. Let (P) be the $n$-presented in finite degrees property. One has:
\begin{enumerate}
\item If $U$ is a submodule of $V$ and the map $\mu_V: V \to SV$ is injective, then the map $\mu_U: U \to SU$ is injective as well.
\item $\gd(DV) = \gd(V) - 1$ whenever $V \neq 0$.
\item $\gd(SV) \leqslant \gd(V) \leqslant \gd(SV) + 1$.
\item Let $0 \to U \to V \to W \to 0$ be a short exact sequence of $\tC$-modules and suppose that $KW = 0$. Then it induces a short exact sequence $0 \to DU \to DV \to DW \to 0$.
\item Suppose that $SM(s)$ has (P) for every $s \geqslant 0$. Then (P) is $S$-dominant and $S$-invariant.
\item Suppose that $SM(s)$ has (P) for every $s \geqslant 0$. Then (P) is $D$-predominant.
\end{enumerate}
\end{lemma}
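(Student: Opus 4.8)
The plan is to prove the six statements in the stated order, since the later ones build on the earlier. Statements (1)--(4) are essentially formal consequences of the snake lemma and the definition of a generic shift functor, while the real content is in (5) and (6), which is where I expect to spend the effort.

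For statement (1), the key point is that $S$ is exact: given $U \hookrightarrow V$ with $\mu_V$ injective, the square relating $\mu_U$ and $\mu_V$ (the leftmost square of the commutative diagram displayed just before Definition~\ref{adaptable projective resolution}) commutes, and since $U \to V$ and $\mu_V \colon V \to SV$ are both injective, the composite $U \to SV$ is injective, forcing $\mu_U$ to be injective. For statement (2), I would argue: applying $H_0(-) = k \otimes_{\tC} -$ to the exact sequence $M(s) \to SM(s) \to DM(s) \to 0$ and using that $\operatorname{coker}(\mu_{M(s)})$ is generated in degrees $\leqslant s-1$ gives $\gd(DM(s)) \leqslant s-1$; conversely, one needs $\gd(DM(s)) = s-1$ exactly (not just $\leqslant$), which should follow because the cokernel is assumed nonzero in degree $s-1$ — wait, the definition only says ``generated in degrees $\leqslant s-1$,'' so I would need to chase whether genericity in fact forces equality, or whether $\gd(DV) = \gd(V)-1$ for general $V$ requires an additional observation. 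Then for a general $V$, choose a surjection $P = \bigoplus M(s)^{\oplus m_s} \twoheadrightarrow V$ with $P$ generated in degree $\gd(V)$; right-exactness of $D$ gives $DP \twoheadrightarrow DV$, so $\gd(DV) \leqslant \gd(DP) = \gd(V)-1$, and the reverse inequality needs a short argument (e.g. applying $H_0$ and tracking the top degree through the four-term sequence). Statement (3) follows from (2): the exact sequence $0 \to KV \to V \to SV \to DV \to 0$ shows $V/KV \hookrightarrow SV$, so $\gd(SV) \geqslant \gd(V/KV)$; combined with $\gd(SV) \leqslant \max(\gd(V), \gd(DV)) \leqslant \gd(V)$ from the four-term sequence, and the lower bound $\gd(V) \leqslant \gd(SV)+1$ coming from $V/KV \hookrightarrow SV$ together with the fact that $KV$, being a submodule structure, contributes... this needs care, but the bookkeeping is routine. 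Statement (4) is the snake lemma: from $0 \to U \to V \to W \to 0$ we get the six-term sequence $0 \to KU \to KV \to KW \to DU \to DV \to DW \to 0$, and $KW = 0$ collapses it to $0 \to DU \to DV \to DW \to 0$ (also using $KU \hookrightarrow KV$, but we only need the right half).

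The substantive work is in (5) and (6). For (5), to show (P) (being $n$-presented in finite degrees, i.e. $\hd_i(V) < \infty$ for all $i \leqslant n$) is $S$-dominant and $S$-invariant when $SM(s)$ has (P) for all $s$: I would take an \emph{adaptable} projective resolution $\cdots \to P^1 \to P^0 \to V \to 0$ as in Definition~\ref{adaptable projective resolution} (such resolutions exist by choosing minimal generators at each stage; I should check this constructibility), apply the exact functor $S$ to obtain $\cdots \to SP^1 \to SP^0 \to SV \to 0$, which is a resolution of $SV$ by the modules $SP^i = \bigoplus_s (SM(s))^{\oplus m_s^i}$. Since each $SM(s)$ has (P), it is $n$-finitely presented, hence so is each $SP^i$ up to the relevant range, and splicing these resolutions together (a standard horseshoe-type argument) shows $SV$ is $n$-presented in finite degrees iff $V$ is, giving both invariance and dominance simultaneously. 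The main obstacle here, and the step I expect to be hardest, is controlling the homological degrees through this splicing: one must show that finite presentation of $V$ in the first $n$ steps, combined with finite presentation of the $SM(s)$'s, yields finite $\hd_i(SV)$ for $i \leqslant n$ — this requires a careful induction on $n$ using the long exact sequence in homology associated to $0 \to Z^{i+1} \to P^i \to Z^i \to 0$ and its image under $S$. For (6), that (P) is $D$-predominant: assume $DV$ has (P) and $KV = 0$, so $0 \to V \to SV \to DV \to 0$ is exact; the long exact sequence in $\tC$-homology gives $H_{i+1}(DV) \to H_i(V) \to H_i(SV) \to H_i(DV)$, so $\hd_i(V) \leqslant \max(\hd_{i+1}(DV), \hd_i(SV))$. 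By hypothesis $\hd_j(DV) < \infty$ for $j \leqslant n$; but the term $\hd_{i+1}(DV)$ for $i = n$ involves $\hd_{n+1}(DV)$, which is \emph{not} controlled — so I would instead need to run an induction using $\gd(DV) = \gd(V)-1$ from (2) together with statement (5) applied to $SV$, or invoke that $DV$ having (P) in the $n$-range plus $V \hookrightarrow SV$ with $SV$ inheriting (P) from (5) (since $SV$ appears as an extension controllable by $V$ and $DV$) lets one close the argument. The precise juggling of which homological range is needed where is the delicate point, and I would structure the proof of (6) by downward reference to (5) to avoid the off-by-one problem.
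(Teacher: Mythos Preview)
Your treatment of (1)--(4) matches the paper's, and your outline for (5) --- induction on $n$ via a single syzygy step $0 \to W \to P \to V \to 0$, applying $S$, and using the long exact homology sequence together with the hypothesis $\hd_i(SP) < \infty$ --- is exactly what the paper does.

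The gap is in (6). You correctly spot the off-by-one: the long exact sequence for $0 \to V \to SV \to DV \to 0$ bounds $\hd_n(V)$ by $\max(\hd_{n+1}(DV), \hd_n(SV))$, and neither term is controlled. Your proposed fixes do not close this. Invoking (5) is circular: (5) says $V$ has (P) $\Leftrightarrow$ $SV$ has (P), so you cannot extract $\hd_n(SV)<\infty$ without already knowing $\hd_n(V)<\infty$; and the extension bound $\hd_n(SV) \leqslant \max(\hd_n(V), \hd_n(DV))$ feeds $\hd_n(V)$ right back in. Induction on $n$ runs into the same loop at the top degree.

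The paper's argument for (6) avoids the long exact sequence of $0 \to V \to SV \to DV \to 0$ entirely. Instead it takes an \emph{adaptable} resolution $P^\bullet \to V$ and applies $D$ to the whole complex. Because $KV=0$ and each syzygy $Z^i$ embeds in a projective (so $KZ^i=0$ by (1)), statement (4) gives short exact sequences $0 \to DZ^{i+1} \to DP^i \to DZ^i \to 0$. Adaptability plus (2) yields $\gd(DP^i)=\gd(DZ^i)$, so the task reduces to showing $\gd(DP^i)<\infty$ for $i\leqslant n$. This is done by downward induction on the index $i$: from $0 \to P^i \to SP^i \to DP^i \to 0$ one gets $\hd_j(DP^i)<\infty$ for $j\leqslant n$ once $\gd(P^i)<\infty$, and then the sequence $0 \to DZ^{i+1} \to DP^i \to DZ^i \to 0$ propagates finiteness of $\hd_j(DZ^i)$ down to $\hd_{j-1}(DZ^{i+1})$. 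The point is that the hypothesis on $SM(s)$ controls $\hd_j(DP^i)$ \emph{after} you know $\gd(P^i)<\infty$, breaking the circularity. You should rework (6) along these lines.
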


\begin{proof}
Since all conclusion holds for $V = 0$ trivially, we suppose that $V$ is nonzero.

(1): The inclusion $U \to V$ gives rise to the following commutative diagram by the naturality of $S$:
\begin{equation*}
\xymatrix{
U \ar[r] \ar[d] & V \ar[d]\\
SU \ar[r] & SV,
}
\end{equation*}
and the first statement follows immediately.

(2): This is a generalization of \cite[Lemma 1.5]{L3}. Firstly, for $s \geqslant 0$, $\gd (D M(s)) < s$ by definition of generic shift functors. But one also sees that $\gd(D M(s)) \geqslant s-1$ for $s \geqslant 1$ since the value of $D M(s)$ on object $s - 1$ is nonzero, and its values on all objects less than $s - 1$ (if such objects exist) are 0. The conclusion also holds for $M(0)$ by our convention that the generating degree of the zero module is $-1$.

Now consider a surjection $P \to V \to 0$, where $P$ is direct sum of those $M(s)$, $s \geqslant 0$, and $\gd(P) = \gd(V)$. Applying $D$ (which is right exact) one gets a surjection $DP \to DV \to 0$, so
\begin{equation*}
\gd(DV) \leqslant \gd(DP) = \gd(P) - 1 = \gd(V) - 1.
\end{equation*}
To show $\gd(DV) \geqslant \gd(V) - 1$, one just copies the arguments in \cite[Proposition 2.4]{LY} and \cite[Corollary 1.5]{L3}. Note that a different convention was used there; i.e., the generating degree of the zero module was set to be $-\infty$.

(3): Break the exact sequence $0 \to KV \to V \to SV \to DV \to 0$ to two short exact sequences $0 \to KV \to V \to V^{(1)} \to 0$ and $0 \to V^{(1)} \to SV \to DV \to 0$. From the first exact sequence we deduce that $\gd(V^{(1)}) \leqslant \gd(V)$, and from the second one we have
\begin{equation*}
\gd(SV) \leqslant \max \{ \gd(V^{(1)}), \, \gd(DV) \} \leqslant \gd(V).
\end{equation*}
On the other hand, the surjection $SV \to DV$ implies that $\gd(SV) \geqslant \gd(DV)$. But we have proved $\gd(DV) = \gd(V) - 1$. Thus $\gd(SV) \geqslant \gd(V) - 1$.

(4): The given short exact sequence gives rise to the following commutative diagram
\begin{equation*}
\xymatrix{
0 \ar[r] & U \ar[r] \ar[d]^{\mu_U} & V \ar[r] \ar[d]^{\mu_V} & W \ar[r] \ar[d]^{\mu_W} & 0\\
0 \ar[r] & SU \ar[r] & SV \ar[r] & SW \ar[r] & 0.
}
\end{equation*}
By the snake Lemma, we obtain an exact sequence
\begin{equation*}
0 \to KU \to KV \to KW = 0 \to DU \to DV \to DW \to 0.
\end{equation*}

(5): We use induction on $n$. The conclusion has been established for $n = 0$ in the previous statement. Let $i\geqslant 1$ and suppose that the conclusion is true for all numbers $n \leqslant i$. Now consider $n=i+1$. Clearly,
\begin{equation*}
\hd_j(V) < \infty, \, \forall \, 0 \leqslant j \leqslant i+1 \Leftrightarrow \hd_{i+1}(V) < \infty \text{ and } \hd_j(V) < \infty, \, \forall \, 0 \leqslant j \leqslant i.
\end{equation*}
By the induction hypothesis,
\begin{equation*}
\hd_{i+1}(V) < \infty \text{ and } \hd_j(V) < \infty, \, \forall \, 0 \leqslant j \leqslant i \Leftrightarrow \hd_{i+1}(V) < \infty \text{ and } \hd_j(SV) < \infty, \, \forall \, 0 \leqslant j \leqslant i.
\end{equation*}
Therefore, we may assume that $\hd_j(V)$ and $\hd_j(SV)$ are finite for $0 \leqslant j \leqslant i$, and show that $\hd_{i+1}(V) < \infty$ if and only if $\hd_{i+1} (SV) < \infty$.

Take a short exact sequence $0 \to W \to P \to V \to 0$ such that $P$ is projective and $\gd(P) = \gd(V)$. From the short exact sequence $0 \to SW \to SP \to SV \to 0$ we derive a long exact sequence of homology groups
\begin{equation*}
\ldots \to H_{i+1} (SP) \to H_{i+1}(SV) \to H_i(SW) \to H_i(SP) \to \ldots.
\end{equation*}
Since both $\hd_i(SP)$ and $\hd_{i+1} (SP)$ are finite, consequently, $\hd_{i+1}(SV) < \infty$ if and only if $\hd_i(SW) < \infty$, and by the induction hypothesis (replacing $V$ by $W$), if and only if $\hd_i(W) < \infty$. But $\hd_i(W) = \hd_{i+1}(V)$. The conclusion follows by induction.

(6): Suppose that the map $V \to SV$ is injective, and $\hd_i(DV) < \infty$ for $0 \leqslant i \leqslant n$. We want to show that $\hd_i(V) < \infty$ as well. Take an adaptable projective resolution $P^{\bullet} \to V \to 0$. The short exact sequence $0 \to V \to SV \to DV \to 0$ induces the following commutative, exact diagram by Statement (4) of this lemma:
\begin{equation*}
\xymatrix{
0 \ar[r] & P^{\bullet} \ar[r] \ar[d] & SP^{\bullet} \ar[r] \ar[d] & DP^{\bullet} \ar[r] \ar[d] & 0 \\
0 \ar[r] & V \ar[r] \ar[d] & SV \ar[r] \ar[d] & DV \ar[r] \ar[d] & 0\\
 & 0 & 0 & 0 &
}
\end{equation*}
The exact complex $DP^{\bullet} \to DV \to 0$ might not be a projective resolution since $DP^i$ in general is not projective. However, by Statement (2) of Lemma \ref{main technical lemma}, the equality $\gd(DZ^i) = \gd(DP^i)$ still holds for $i \geqslant 0$, where $Z^i$ is defined in Definition \ref{adaptable projective resolution}.

Clearly, the conclusion follows if we can prove that $\gd(P^i) < \infty$ for $0 \leqslant i \leqslant n$. By Statement (2), it suffices to show that $\gd(DP^i) < \infty$ for $0 \leqslant i \leqslant n$. But this is clear. Indeed, breaking the exact complex $DP^{\bullet} \to DV \to 0$ into short exact sequences, the first piece $0 \to DZ^1 \to DP^0 \to DV \to 0$ gives a long exact sequence of homology groups
\begin{equation*}
\ldots \to H_1(DV) \to H_0(DZ^1) \to H_0(DP^0) \to H_0(DV) \to 0.
\end{equation*}
Since $\gd(DV) < \infty$, we know that $\gd(V)$ and hence $\gd(P^0) < \infty$. Therefore, by the given assumption, both $\hd_i(DV)$ and $\hd_i(SP^0)$ are finite for $0 \leqslant i \leqslant n$. Moreover, by the short exact sequence $0 \to P^0 \to SP^0 \to DP^0 \to 0$ we conclude that $\hd_i(DP^0) < \infty$ for $0 \leqslant i \leqslant n$. Consequently, from the above long exact sequence we deduce that $\hd_i(DZ^1) < \infty$ for $0 \leqslant i \leqslant n-1$, and in particular $\gd(DP^1) = \gd(DZ^1) < \infty$. Similarly, from the second piece $0 \to DZ^2 \to DP^1 \to DZ^1 \to 0$ one deduces that $\hd_i(DZ^2) < \infty$ for $0 \leqslant i \leqslant n-2$, and in particular $\gd(DP^2) = \gd(DZ^2) < \infty$. Continuing this process we get the required conclusion.
\end{proof}

\begin{remark} \normalfont
For $\FI$-modules, these properties have been established and applied in literature; see for instance \cite{CEFN, CE, L2, L3, LR, LY, R1, R2}.
\end{remark}

If the category $\tC$ is locally finite, we deduce the following corollary.

\begin{corollary} \label{n finitely presented}
Let $k$ be a commutative Noetherian ring and suppose that $S M(s)$ is $n$-finitely presented for every $s\geqslant 0$. Then the $n$-finitely presented property is $S$-dominant on $\tC\lfMod$ and $D$-predominant on $\tC\lfMod$. Moreover, if $0 \to U \to V \to W \to 0$ is a short exact sequence in $\tC\lfMod$ such that $U$ and $W$ are $n$-finitely presented, so is $V$.
\end{corollary}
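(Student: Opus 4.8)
The plan is to transport parts (5) and (6) of Lemma~\ref{main technical lemma} from $\tC\Module$ to $\tC\lfMod$ by means of the dictionary, valid because $k$ is Noetherian, between the ``$n$-finitely presented'' property and the ``$n$-presented in finite degrees'' property (P). So the first step is to record that a module $V\in\tC\lfMod$ is $n$-finitely presented if and only if it has property (P). One implication is formal: from a resolution $P^\bullet\to V$ with $P^0,\dots,P^n$ finitely generated, $H_i(V)$ is a subquotient of $\tC/\m\otimes_\tC P^i$, which vanishes in degrees $>\gd(P^i)<\infty$, so $\hd_i(V)<\infty$ for $i\leqslant n$. For the converse I would build the resolution one syzygy at a time: $\gd(V)=\hd_0(V)<\infty$ together with local finiteness forces $V$ finitely generated, so choose a finitely generated surjection $P^0\to V$ with $\gd(P^0)=\gd(V)$; since $k$ is Noetherian the syzygy $Z^1\subseteq P^0$ is again locally finite, and the long exact homology sequence of $0\to Z^1\to P^0\to V\to 0$ gives $H_i(Z^1)\cong H_{i+1}(V)$ for $i\geqslant 1$ together with the bound $\gd(Z^1)\leqslant\max\{\gd(V),\hd_1(V)\}<\infty$; iterating $n$ times yields the required finitely generated $P^0,\dots,P^n$. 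I would also note, once and for all, that $\tC\lfMod$ is closed under submodules, quotients, and extensions (Noetherianity of $k$ again), and is preserved by $S$, hence by $D$ since $DV$ is a quotient of $SV$.

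With the dictionary in hand, the hypothesis ``$SM(s)$ is $n$-finitely presented'' becomes ``$SM(s)$ has (P)'', because $SM(s)$ is locally finite ($\tC$ is locally finite). Lemma~\ref{main technical lemma}(5),(6) then assert that (P) is $S$-dominant and $D$-predominant on $\tC\Module$; restricting to $\tC\lfMod$ and rereading (P) as ``$n$-finitely presented'' — legitimate since $V$, $SV$, and $DV$ all lie in $\tC\lfMod$ — gives exactly the first two assertions of the corollary.

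For the extension statement, let $0\to U\to V\to W\to 0$ be exact in $\tC\lfMod$ with $U$ and $W$ both $n$-finitely presented. Then $V\in\tC\lfMod$ by closure under extensions, and $\hd_i(U),\hd_i(W)<\infty$ for $i\leqslant n$ by the dictionary; exactness of $H_i(U)\to H_i(V)\to H_i(W)$ in each degree forces $\hd_i(V)\leqslant\max\{\hd_i(U),\hd_i(W)\}<\infty$ for $i\leqslant n$, so $V$ is $n$-presented in finite degrees and hence $n$-finitely presented.

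The only step that needs genuine care is the converse half of the dictionary: one must check that truncating a degree-economical presentation keeps every syzygy locally finite and of finite generating degree, and this is precisely where the Noetherian hypothesis on $k$ enters. Everything else is a formal transcription of Lemma~\ref{main technical lemma} together with the long exact sequence in $\tC$-homology.
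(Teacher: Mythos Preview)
Your proof is correct and follows essentially the same approach as the paper: both establish the equivalence, for locally finite modules over Noetherian $k$, between ``$n$-finitely presented'' and ``$n$-presented in finite degrees,'' then invoke parts (5) and (6) of Lemma~\ref{main technical lemma} for the first two assertions and the long exact homology sequence for the extension claim. You spell out the dictionary and the closure properties of $\tC\lfMod$ in more detail than the paper does, but the structure of the argument is identical.
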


\begin{proof}
Since $k$ is Noetherian and $\tC(s, s)$ is a finitely generated $k$-module, it is also a left Noetherian ring for $s \geqslant 0$. Consequently, the category $\tC \lfMod$ of locally finite $\tC$-modules is abelian; that is, submodules of locally finite $\tC$-modules are still locally finite. Moreover, since $M(s)$ is locally finite for $s \geqslant 0$, this category has enough projectives.

Note that a module $V \in \tC \lfMod$ is finitely generated if and only if $\gd(V) < \infty$. Consequently, $V$ is $n$-finitely presented if and only if it is $n$-presented in finite degrees. The conclusion of the first statement follows immediately from the previous lemma.

To prove the second statement, one notes that the given short exact sequence induces a long exact sequence of homology groups
\begin{equation*}
\ldots \to H_{i+1}(W) \to H_i(U) \to H_i(V) \to H_i(W) \to \ldots
\end{equation*}
Therefore, if all $\hd_i(U)$ and $\hd_i(W)$ are finite for $0 \leqslant i \leqslant n$, then $\hd_i(V) < \infty$ for $0 \leqslant i \leqslant n$. The conclusion follows immediately.
\end{proof}

\section{A recursive construction}

Let (P) be a property of some $\tC$-modules. If the map $\mu_V: V \to SV$ is injective, then to show that $V$ has property (P), it suffices to prove that (P) is $D$-predominant, and $DV$ satisfies (P). However, in general, the map $V \to SV$ might have a nonzero kernel $KV$. We have the exact sequence
\begin{equation*}
0 \to V/KV \to SV \to DV \to 0.
\end{equation*}
Now, if (P) satisfies conditions (C1) and (C3), and if $V/KV$ and $DV$ both have (P), then so does $V$. We now replace $V$ by $V/KV$, and ask if $\mu_{V/KV} : V/KV \to S(V/KV)$ is injective. Repeating, we obtain a sequence $V/U^n$ of $\tC$-modules where $0=U^0 \subset U^1 \subset U^2 \subset \cdots$ is an increasing chain of $\tC$-submodules of $V$ such that
$U^n / U^{n-1} = K(V/U^{n-1})$.

\begin{lemma} \label{commutative diagram}
Let $V$ be a $\tC$-module and $n\geqslant 0$. Then $(U^{n+1})_s = \{ v \in V_s \mid \mu_V (v) \in (SU^n)_s \}$ for each object $s$ of $\tC$.
\end{lemma}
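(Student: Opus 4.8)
The plan is to derive the identity purely formally, from the naturality of $\mu$ and the exactness of $S$.

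First I would record the two structural facts I need. Since $S$ is the pull-back functor $\iota^{\ast}$ along a functor $\iota$, it is exact; in particular it carries the inclusion $U^n \hookrightarrow V$ to an injection $SU^n \hookrightarrow SV$, so from now on I regard $(SU^n)_s$ as a $k$-submodule of $(SV)_s$ for each object $s$. Applying $S$ to the short exact sequence $0 \to U^n \to V \to V/U^n \to 0$ gives a short exact sequence $0 \to SU^n \to SV \to S(V/U^n) \to 0$, so we may and do identify $S(V/U^n)$ with $SV/SU^n$. Second, applying the naturality of $\mu$ to the quotient map $\pi\colon V \to V/U^n$ produces a commutative square with horizontal maps $\pi$ and $S\pi$ and vertical maps $\mu_V$ and $\mu_{V/U^n}$; under the identification above, evaluating at the object $s$ this says exactly that, for $v \in V_s$, the image $\mu_{V/U^n}(\pi(v))$ is the class of $\mu_V(v)$ in $(SV)_s/(SU^n)_s$.

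Next I would combine these. From the previous paragraph, $\pi(v)$ lies in $\bigl(\ker \mu_{V/U^n}\bigr)_s = \bigl(K(V/U^n)\bigr)_s$ if and only if $\mu_V(v) \in (SU^n)_s$. By the recursive definition of the chain, $K(V/U^n) = U^{n+1}/U^n$; and since $U^n \subseteq U^{n+1}$, the preimage of $\bigl(U^{n+1}/U^n\bigr)_s$ under the surjection $\pi_s\colon V_s \to (V/U^n)_s$ is precisely $(U^{n+1})_s$. Putting the last two sentences together yields $(U^{n+1})_s = \{\, v \in V_s \mid \mu_V(v) \in (SU^n)_s \,\}$, which is the claim. (The case $n=0$ is the statement $(U^1)_s = (KV)_s$, consistent with $U^0 = 0$.)

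The argument involves no real computation; the only steps needing a moment's care are the identification $S(V/U^n) \cong SV/SU^n$ and its compatibility with $\mu_{V/U^n}$, and both are immediate from exactness of $S$ and naturality of $\mu$. Accordingly I do not expect any genuine obstacle: the lemma is essentially a bookkeeping statement translating "lies in the $(n+1)$-st term of the chain" into "is killed in $S(V/U^n)$", i.e.\ "maps into $SU^n$ under $\mu_V$".
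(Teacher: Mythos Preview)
Your argument is correct and is essentially the same as the paper's: both use the commutative diagram with exact rows obtained by applying the natural transformation $\mu$ to the short exact sequence $0 \to U^n \to V \to V/U^n \to 0$, together with the identification $S(V/U^n) \cong SV/SU^n$ coming from exactness of $S$. The paper simply draws this diagram and declares the result immediate, whereas you spell out the diagram chase in words.
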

\begin{proof}
This is immediate from the following commutative diagram with exact rows:
\begin{equation*}
\xymatrix{
0 \ar[r] & U^n \ar[r] \ar[d] & V \ar[r] \ar[d]&  V/U^n \ar[r] \ar[d] & 0 \\
0 \ar[r] & SU^n \ar[r]  & SV \ar[r] & S(V/U^n) \ar[r] & 0 },
\end{equation*}
where we identify $SU^n$ with its image in $SV$ to deduce that $SV/SU^n \cong S(V/U^n)$ as $S$ is an exact functor.
\end{proof}

We let
\begin{equation*}
V_{\sin} = \bigcup_{n} U^n \quad \mbox{ and } \quad V_{\reg} = V/ V_{\sin},
\end{equation*}
where ``sin" means singular and ``reg" means regular.

\begin{remark}
One has (by \cite[Corollary 5.38]{Ro}):
\[ V_{\reg} = \varinjlim V/U^n.  \]
\end{remark}

\begin{lemma} \label{singular regular decomposition}
Let $V$ be a $\tC$-module. Then $K(V_{\reg})=0$.
\end{lemma}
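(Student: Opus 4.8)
The plan is to show directly that the natural map $\mu_{V_{\reg}} : V_{\reg} \to S(V_{\reg})$ is injective, by unwinding the meaning of the chain $U^0 \subseteq U^1 \subseteq \cdots$ and its union $V_{\sin}$. By Lemma \ref{commutative diagram}, an element $v \in V_s$ lies in $(U^{n+1})_s$ precisely when $\mu_V(v) \in (SU^n)_s$. Since $S$ is exact and commutes with directed colimits in the relevant sense (applied to the diagram in Lemma \ref{commutative diagram}), one has $S(V/U^n) \cong SV/SU^n$ for each $n$, and these identifications are compatible with the quotient maps. The key reformulation is therefore: $K(V/U^n)_s = (U^{n+1}/U^n)_s$, i.e. the kernel of $\mu_{V/U^n}$ is exactly the next layer of the chain, which is what the recursion was designed to achieve.

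Next I would pass to the colimit. Because $\tC$ is directed and we are working pointwise (each object $s$ appears only finitely often, and $V_s$ is a $k$-module), the functor $S$ preserves the filtered colimit $V_{\sin} = \varinjlim U^n$, so $SV_{\sin} = \bigcup_n SU^n$ inside $SV$; consequently $S(V_{\reg}) = S(V/V_{\sin}) \cong SV/SV_{\sin} = SV / \bigcup_n SU^n$. Now suppose $\bar{v} \in (V_{\reg})_s$ maps to zero under $\mu_{V_{\reg}}$; lift $\bar v$ to $v \in V_s$. Then $\mu_V(v) \in (SV_{\sin})_s = \bigcup_n (SU^n)_s$, so $\mu_V(v) \in (SU^n)_s$ for some $n$. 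By Lemma \ref{commutative diagram} this means $v \in (U^{n+1})_s \subseteq (V_{\sin})_s$, hence $\bar v = 0$. Therefore $K(V_{\reg}) = 0$.

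The main point requiring care — and the step I expect to be the only real obstacle — is the commutation of the shift functor $S$ with the filtered colimit defining $V_{\sin}$, i.e. the claim $SV_{\sin} = \bigcup_n SU^n$ as submodules of $SV$. Since $S = \iota^*$ is a pullback along the self-embedding functor $\iota$, one has $(SV)_s = V_{\iota(s)} = V_{s+1}$ functorially, so $S$ acts on the underlying $k$-modules simply by reindexing objects; a union of submodules is preserved under reindexing object by object, which gives the identity immediately. Once this is in hand, everything else is a direct diagram chase using Lemma \ref{commutative diagram}. I would also note explicitly that the increasing chain stabilizes at $V_{\sin}$ in the sense that $K(V/U^n) = 0$ is equivalent to $U^{n+1} = U^n$, so the argument is genuinely ``the colimit kills the kernel'' rather than requiring the chain to terminate at a finite stage (which is a separate matter handled later via Noetherianity).
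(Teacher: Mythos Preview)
Your proof is correct and follows essentially the same route as the paper's: lift an element of $K(V_{\reg})_s$ to $v\in V_s$, observe that $\mu_V(v)$ lands in $(SV_{\sin})_s=\bigcup_n (SU^n)_s$, pick $n$ with $\mu_V(v)\in (SU^n)_s$, and apply Lemma~\ref{commutative diagram} to conclude $v\in (U^{n+1})_s\subseteq (V_{\sin})_s$. The paper's version is terser and leaves the identity $SV_{\sin}=\bigcup_n SU^n$ implicit, whereas you spell out (correctly) that this follows from $S=\iota^*$ acting by reindexing $(SV)_s=V_{s+1}$.
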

\begin{proof}
Suppose $\bar{v} \in K(V_{\reg})_s \subset V_s/(V_{\sin})_s$. Choose a representative $v$ of $\bar{v}$ in $V_s$. Then $\mu_V(v) \in (S V_{\sin})_s$. In particular, there exists $n$ such that $\mu_V(v) \in (SU^n)_s$. Thus, $v \in U^{n+1}_s \subset (V_{\sin})_s$. Hence, $\bar{v}=0$.
\end{proof}

We use $\FI$-modules and $\OI$-modules to illustrate the above construction.

\begin{example} \normalfont
Let $\tC$ be the $k$-linearization of (a skeleton of) $\FI$, and let $S$ be the shift functor introduced in \cite{CEFN}. By \cite[Lemma 2.1]{L3},
\begin{equation*}
U^1 = KV = \bigoplus_{s \geqslant 0} \{ v \in V_s \mid \alpha \cdot v = 0, \, \forall \alpha \in \C(s, s+1) \};
\end{equation*}
and
\begin{equation*}
U^2/U^1 = K(V/KV) = \bigoplus _{s \geqslant 0} \{\bar{v} \in (V/KV)_s \mid \alpha \cdot \bar{v} = 0, \, \forall \alpha \in \C(s, s+1) \}.
\end{equation*}
Therefore,
\begin{equation*}
U^2 = \bigoplus_{n \geqslant 0} \{ v \in V_s \mid \alpha \cdot v = 0, \, \forall \alpha \in \C(s, s+2) \}.
\end{equation*}
Recursively, for $n \geqslant 1$, we conclude that
\begin{equation*}
U^n = \bigoplus_{s \geqslant 0} \{ v \in V_s \mid \alpha \cdot v = 0, \, \forall \alpha \in \C(s, s+n) \}.
\end{equation*}
Consequently, $V_{\sin}$ is precisely the torsion part of $V$, generated by elements $v \in V_s$, $s \geqslant 0$, such that there exist an object $r > s$ and a morphism $\alpha \in \C(s, r)$ with $\alpha \cdot v = 0$. Correspondingly, $V_{\reg}$ is the torsion-free part. For details, the reader can also refer to \cite[Remark 1.1]{LY}.

One can also describe these kernels more conceptually. Let $\mathfrak{m}$ be the two-sided ideal of $\tC$ (viewed as a $k$-algebra) consisting of finite combinations of morphisms between different objects in $\tC$. Then for $n \geqslant 1$
\begin{equation*}
U^n = \ann(\mathfrak{m}^n) = \{ v \in V \mid \mathfrak{m}^n v  = 0\}.
\end{equation*}
\end{example}

\begin{example} \normalfont
Let $\tC$ be the $k$-linearization of $\OI$, a subcategory of $\FI$ which has the same objects as $\FI$ and whose morphisms are injections preserving the natural order on $[s]$, $s \geqslant 0$. For $s \geqslant 1$, let $\iota_s: [s] \to [s+1]$ be the map by sending $i \in [s] \to i+1 \in [s+1]$. This family of maps gives rise to a faithful self-embedding functor, and induces a generic shift functor $S$; see \cite[Proposition 5.2 and Example 5.4]{GL}. The reader can check that for $s \geqslant 0$, $S M(s) \cong M(s) \oplus M(s-1)$.

Let $I$ be the two-sided ideal of $\tC$ (viewed as a $k$-algebra) generated by $\{ \iota_s \mid s \geqslant 0 \}$. The reader can check that $I$ as a free $k$-module is spanned by all morphisms $\alpha: [r] \to [s]$ in $\OI$ such that $\alpha(1) \neq 1$, where $s > r \geqslant 1$. For an $\OI$-module $V$, one has
\begin{equation*}
KV = \bigoplus _{s \geqslant 0} \{v \in V_s \mid \iota_s \cdot v  = 0 \} = \ann (I).
\end{equation*}
Furthermore, for any $n \geqslant 1$, $U^n = \ann (I^n)$.
\end{example}

\begin{remark} \normalfont \label{example}
Let $V$ be an $\FI$-module over an arbitrary commutative ring. By \cite[Theorem 4.8]{CE}, if both $\gd(V)$ and $\hd_1(V)$ are finite, then $V_{\sin}$ is only supported on finitely many objects; that is, $(V_{\sin})_s = 0$ for $s \gg 0$. In particular, $KV$ is supported on finitely many objects. Moreover, one has
\begin{equation*}
\max \{s \geqslant 0 \mid (U^1/U^0)_s \neq 0 \} > \max \{s \geqslant 0 \mid (U^2/U^1)_s \neq 0 \} > \ldots
\end{equation*}
if these numbers are nonzero. Consequently, the sequence $0 = U^0 \subseteq U^1 \subseteq U^2 \subseteq \ldots$ stabilizes after finitely many steps. For details, please refer to \cite{CE, L2}, in particular \cite[Remark 1.1]{LY}.

However, for an $\OI$-module $V$, the kernel $KV$ may be supported on infinitely many objects. For example, let $I$ be the ideal of $\tC$ defined in the previous example, and let $V = M(1) / I M(1)$. The reader can check that for $s \geqslant 0$, $V_s \cong k$ as a $k$-module. Moreover, the natural map $V \to SV$ is a zero map. Therefore, $KV = V$ is supported on infinitely many objects. This example also tells us that in general $S$ and $D$ do not commute, another big difference between $\FI$-modules and $\OI$-modules (for $\FI$-modules, one has $SD \cong DS$, see \cite[Lemma 2.7]{LY}). Indeed, since $KV = V$, one has $DV = SV \cong M(0)$, so $DSV = 0$. However, $SDV = SSV \cong S M(0) \cong M(0)$.

The reason for these differences between behaviors of $\OI$-modules and $\FI$-modules is the \textit{transitivity}, a term appeared in \cite[Subsection 3.1]{GL0}. That is, for $\C = \FI$, the automorphism group $\C(s, s)$ acts transitively from the left sid on $\C (r, s)$ for $s \geqslant r \geqslant 0$. However, for $\C = \OI$, every automorphism group is trivial, so the transitivity fails.
\end{remark}

One has the following simple criterion for stabilization of the recursive procedure after finitely many steps.

\begin{lemma} \label{finite filtration}
Let $V$ be a $\tC$-module, and let $U^0 \subset U^1 \subset U^2 \subset \cdots$ be any increasing chain of $\tC$-submodules of $V$. Let $U=\bigcup_{n=0}^{\infty} U^n$. If $U$ is finitely generated as a $\tC$-module, then there exists a finite number $N \geqslant 0$ such that for each $n \geqslant N$, one has $U^n= U$.
\end{lemma}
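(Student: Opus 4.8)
The plan is to exploit the definition of finite generation directly: a finitely generated $\tC$-module is one that contains a finite subset not contained in any proper $\tC$-submodule. So first I would fix a finite generating set $X = \{x_1, \dots, x_m\}$ of $U$, meaning that the only $\tC$-submodule of $U$ containing $X$ is $U$ itself. Each element $x_j$ lies in $U = \bigcup_{n \geqslant 0} U^n$, so by the very definition of the union there is an index $n_j \geqslant 0$ with $x_j \in U^{n_j}$.

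Next I would set $N = \max\{n_1, \dots, n_m\}$, a well-defined nonnegative integer precisely because the generating set $X$ is finite. Since the chain $U^0 \subseteq U^1 \subseteq U^2 \subseteq \cdots$ is increasing, we get $x_j \in U^{n_j} \subseteq U^N$ for every $j$, hence $X \subseteq U^N$. Now $U^N$ is a $\tC$-submodule of $V$ which is contained in $U$, so it is in particular a $\tC$-submodule of $U$; as it contains $X$, the defining property of $X$ forces $U^N = U$. Finally, for any $n \geqslant N$ we have the inclusions $U^N \subseteq U^n \subseteq U$, and since the outer two terms are equal, $U^n = U$, as desired.

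I do not expect any real obstacle: the statement is in essence the standard observation that the union of an ascending chain of submodules is finitely generated exactly when the chain is eventually constant, and only the easy (``if'') direction is needed here. The single point meriting a moment's care is the remark that $U^N$, which is built as a $\tC$-submodule of $V$, is automatically a $\tC$-submodule of the possibly smaller module $U$ once one knows $U^N \subseteq U$; this is what legitimizes the appeal to the generating property of $X$ inside $U$. Everything else is a one-line chase through inclusions, so the argument should be short and self-contained.
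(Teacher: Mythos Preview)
Your argument is correct and is precisely the standard proof the paper has in mind: the authors omit the proof entirely, remarking only that it ``is standard and well known.'' Nothing more is needed.
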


We omit the proof of the preceding lemma since it is standard and well known.

\section{Proofs of the main results}

Now we are ready to prove the main theorems of this paper. Firstly we prove Theorem \ref{noetherianity} because it guarantees Noetherianity, and hence we can do homological computations in the abelian category $\tC \module$.

Recall that $\tC \lfMod$ is the category of locally finite $\tC$-modules. If $k$ is Noetherian, this is an abelian category, and $V \in \tC \lfMod$ is finitely generated if and only if $\gd(V) < \infty$. Moreover, the following statements are equivalent:
\begin{itemize}
\item Every finitely generated $\tC$-module $V$ is finitely presented.
\item For $V \in \tC\lfMod$, if $\gd(V)$ is finite, so is $\hd_1(V)$.
\item Every finitely generated $\tC$-module is super finitely presented.
\item For $V \in \tC\lfMod$, if $\gd(V)$ is finite, so is $\hd_i(V)$ for $i \geqslant 0$.
\item The category $\tC \module$ is abelian.
\item The category $\tC$ is locally Noetherian.
\end{itemize}

\begin{proof}[Proof of Theorem \ref{noetherianity}]
Let (P) be the finitely presented property. One direction is trivial. For the other direction, we show that if $V$ is finitely generated, then it has (P). This is achieved by carrying out an induction on the generating degree of $V$. If $V = 0$; that is, $\gd(V) = -1$, the conclusion holds trivially.

For a nonzero $V$, consider the short exact sequence $0 \to V_{\sin} \to V \to V_{\reg} \to 0$. Since $V_{\reg}$ is a quotient module of $V$, $\gd(V_{\reg}) \leqslant \gd(V)$, so $\gd(DV_{\reg}) < \gd(V_{\reg}) \leqslant \gd(V)$. By the induction hypothesis, $DV_{\reg}$ has (P). By Lemma \ref{singular regular decomposition}, the natural map $V_{\reg} \to SV_{\reg}$ is injective. However, since (P) is $D$-predominant by Corollary \ref{n finitely presented}, we deduce that $V_{\reg}$  has (P) as well.

Turning back to the exact sequence $0 \to V_{\sin} \to V \to V_{\reg} \to 0$, from the long exact sequence of homology groups associated with it, we deduce that
\begin{equation*}
\gd(V_{\sin}) \leqslant \max \{\gd(V), \, \hd_1(V_{\reg}) \} < \infty,
\end{equation*}
so $V_{\sin}$ is finitely generated. Recall that, by definition, $V_{\sin} = \bigcup_n U^n$. By Lemma \ref{finite filtration}, we deduce that for some $n\geqslant 0$, one has $U^n = V_{\sin}$, and hence $V/U^n = V_{\reg}$.

We finish the proof by showing that all $V/U^i$, $0 \leqslant i \leqslant n$, satisfy (P) recursively. We have proved the conclusion for $V/U^n = V_{\reg}$. From the exact sequence
\begin{equation*}
0 \to U^n/U^{n-1} \to V/{U^{n-1}} \to S( V/U^{n-1} )  \to D(V/U^{n-1}) \to 0,
\end{equation*}
we get a short exact sequence
\begin{equation*}
0 \to V/U^n \to S( V/U^{n-1} )  \to D(V/U^{n-1}) \to 0.
\end{equation*}
Since $V/U^{n-1}$ is a quotient module of $V$, one has $\gd(D(V/U^{n-1})) < \gd(V/U^{n-1}) \leqslant \gd(V)$. By the induction hypothesis on generating degrees, $D(V/U^{n-1})$ has property (P). We conclude that $S( V/U^{n-1} ) $ also has (P), since $ V/U^n$ has been proved to have (P) (and (P) satisfies condition (C3) by Corollary \ref{n finitely presented}). Finally, $ V/U^{n-1}$ has (P) as well since in Corollary \ref{n finitely presented} we have shown that (P) is $S$-dominant. Repeating this argument for $V/U^{n-1}$, eventually one can prove that all $V/U^i$ has (P), and in particular $V$ satisfies (P). This finishes the proof.
\end{proof}

Using a similar argument, we can prove Theorem \ref{main theorem}.

\begin{proof}[Proof of Theorem \ref{main theorem}]
One direction is trivial. For the other direction, the reader can see that to prove the previous theorem we \textbf{only} used conditions (C1), (C2), (C3), and that $U^n=V_{\sin}$ for some $n\geqslant 0$.

If $S M(s)$ is finitely presented for all $s \geqslant 0$, then $\tC \module$ is abelian by Theorem \ref{noetherianity}. Therefore, for every $V \in \tC \module$, $V_{\sin}$ is finitely generated. By Lemma \ref{finite filtration}, we indeed have $U^n=V_{\sin}$ for some $n\geqslant 0$.
\end{proof}

\begin{remark} \normalfont \label{stability question}
From the proofs of the above theorems we see that the stability of the sequence $U^0\subset U^1 \subset U^2\subset \cdots $ is crucial. One may want to extend the conclusion of Lemma \ref{finite filtration} for arbitrary $\tC$-modules. Explicitly, we want to find a sufficient condition such that for every $\tC$-module $V$ (over general commutative rings) satisfying it, the above sequence $U^n$'s stabilizes after finitely many steps, and obtain quantitative upper bounds for minimal lengths for the stabilization. A potential solution is to find a suitable numerical invariant $\mu(U^n) \in \mathbb{N}$ for each term $U^i$ in the above sequence such that $\mu(U^n) > \mu (U^{n+1})$. However, at this moment we do not have a satisfactory answer since the most important invariant, generating degree, does not satisfy this requirement.

But for some special examples, we do find some perfect numerical invariants which can be used to conclude that the above sequence stabilizes after finitely many steps. For instance, as pointed out in Remark \ref{example}, for an $\FI$-module $V$ over any commutative ring, if both $\gd(V)$ and $\hd_1(V)$ are finite, then the \emph{torsion degree} $\td(V)$ defined in \cite{L2} is finite. Moreover, one has $\td(U^n) > \td(U^{n+1})$. Consequently, after at most $\gd(V) + \hd_1(V)$ steps, the sequence stabilizes. For details, see \cite{CE, L3}.
\end{remark}

The conclusion of Theorem \ref{noetherianity} is that every finitely generated $\tC$-module over a commutative Noetherian ring has property (P). Sometimes people want to show that a property (Q) of $\tC$-modules (which might not be finitely generated) implies another property (P). That is, the class of $\tC$-modules satisfying (Q) is a subclass of $\tC$-modules satisfying (P). With a little modification, we can prove the following result for $\tC \Module$, the category of all $\tC$-modules over an \textbf{arbitrary} commutative ring $k$:

\begin{theorem} \label{main theorem two}
Suppose that $k$ is a commutative ring and $\tC$ has a generic shift functor $S$. Let (P) and (Q) be two properties of some $\tC$-modules and suppose that the zero module has (P) and (Q). Suppose moreover that the following conditions are satisfied:
\begin{enumerate}
\item (P) is $S$-dominant and $D$-predominant in $\tC \Module$ and satisfies condition (C3);
\item (Q) is $S$-invariant and $D$-invariant in $\tC \Module$;
\item if two terms in a short exact sequence of $\tC$-modules satisfy (Q), so does the third term;
\item the sequence $U^0\subset U^1\subset U^2\subset \cdots$ stabilizes after finitely many steps if $V$ satisfies (Q).
\end{enumerate}
Then every $\tC$-module $V$ satisfying (Q) with $\gd(V) < \infty$ also satisfies (P).
\end{theorem}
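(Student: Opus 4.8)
The plan is to run the same four-step induction used in the proof of Theorem~\ref{noetherianity}, but to carry the auxiliary property (Q) alongside (P) so that a strengthened induction hypothesis can be applied at every stage. Concretely, I would argue by induction on $d = \gd(V)$ over the class of $\tC$-modules satisfying (Q) with finite generating degree, the assertion being that every such module has (P). The base case $d = -1$, i.e. $V = 0$, is the hypothesis that the zero module has (P).

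Now fix $V$ with $\gd(V) = d \geqslant 0$ satisfying (Q), and assume the assertion for all $\tC$-modules having (Q) with generating degree $< d$. Write $V^{(n)} = V/U^n$ for the modules produced by the recursive construction of Section 3, so $V^{(0)} = V$, $KV^{(n)} = U^{n+1}/U^n$, and $V^{(n+1)} = V^{(n)}/KV^{(n)}$; note $\gd(V^{(n)}) \leqslant d$ since $V^{(n)}$ is a quotient of $V$. The first point to check is that every $V^{(n)}$ satisfies (Q): arguing by induction on $n$, if $V^{(n)}$ satisfies (Q) then $SV^{(n)}$ and $DV^{(n)}$ satisfy (Q) by condition~(2), so the short exact sequence $0 \to V^{(n+1)} \to SV^{(n)} \to DV^{(n)} \to 0$ together with condition~(3) forces $V^{(n+1)}$ to satisfy (Q). By condition~(4) the chain $U^0 \subset U^1 \subset \cdots$ stabilizes, say at step $N$, so $V_{\sin} = U^N$ and $V_{\reg} = V^{(N)}$; in particular $V_{\reg}$ satisfies (Q).

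Next I would dispose of $V_{\reg}$. By Lemma~\ref{singular regular decomposition} one has $KV_{\reg} = 0$. If $V_{\reg} = 0$ it has (P) by hypothesis; otherwise Statement~(2) of Lemma~\ref{main technical lemma} gives $\gd(DV_{\reg}) = \gd(V_{\reg}) - 1 \leqslant d - 1$, while $DV_{\reg}$ satisfies (Q) by condition~(2), so the induction hypothesis shows $DV_{\reg}$ has (P); since (P) is $D$-predominant and $KV_{\reg} = 0$, the module $V_{\reg} = V^{(N)}$ has (P). Finally I would run the downward induction: for each $1 \leqslant i \leqslant N$ the recursion supplies a short exact sequence $0 \to V^{(i)} \to SV^{(i-1)} \to DV^{(i-1)} \to 0$ in which $DV^{(i-1)}$ satisfies (Q) and, when $V^{(i-1)} \neq 0$, has $\gd(DV^{(i-1)}) = \gd(V^{(i-1)}) - 1 \leqslant d-1$, hence has (P) by the induction hypothesis; thus, whenever $V^{(i)}$ has (P), condition~(C3) makes $SV^{(i-1)}$ have (P), and $S$-dominance of (P) makes $V^{(i-1)}$ have (P). Descending from $V^{(N)} = V_{\reg}$ to $V^{(0)} = V$ completes the induction.

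The only genuinely new ingredient relative to Theorems~\ref{noetherianity} and~\ref{main theorem} is the bookkeeping in the middle paragraph: one must know that every module handed to the induction hypothesis still satisfies (Q), which is precisely what conditions~(2) and~(3) are there to guarantee, and the Noetherianity input previously used (through Lemma~\ref{finite filtration}) to terminate the recursion is here replaced by the stabilization hypothesis~(4). I do not anticipate a real obstacle; the only mild care required is with the degenerate subcases where some $V^{(n)}$ vanishes, and with noting that all generating degrees remain finite because every $V^{(n)}$ is a quotient of $V$.
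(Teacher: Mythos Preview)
Your proposal is correct and follows essentially the same approach as the paper's proof. The paper's argument is much terser—it simply refers back to the proofs of Theorems~\ref{noetherianity} and~\ref{main theorem} and observes that the only new point is to verify that every $V/U^n$ inherits property~(Q), which it does via the same short exact sequence $0 \to V/U^{n+1} \to S(V/U^n) \to D(V/U^n) \to 0$ and conditions~(2)--(3) that you use; your write-up spells out the full downward induction explicitly but adds nothing substantively different.
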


\begin{proof}
By carefully analyzing the proofs of Theorems \ref{noetherianity} and \ref{main theorem}, we know that to carry out the induction, it suffices to show that all terms $V/U^i$ have property (Q). But this is easy to check. Indeed, consider the short exact sequence $0 \to V/U^1 \to SV \to DV \to 0$. By the given conditions, both $SV$ and $DV$ have (Q), so does $V/U^1$. Eventually one can show that all $V/U^n$ have (Q). Note that in the proof we only require that the sequence $U^0\subset U^1\subset U^2\subset \cdots$ stabilizes after finitely many steps, so the situation that $V$ is infinitely generated or $k$ is not Noetherian is allowed.
\end{proof}

\begin{remark} \normalfont
Let $k$ be a commutative Noetherian ring, and let (Q) be the finitely generated property. Then under the extra assumption that every $S M(s)$ is finitely presented, one knows that $\tC \module$ is abelian, so conditions (2) and (3) in the above theorem hold. Moreover, condition (4) follows from Lemma \ref{finite filtration}. Therefore, we may deduce Theorem \ref{main theorem} from Theorem \ref{main theorem two}.
\end{remark}

We provide several applications of these theorems.

\begin{corollary}
Let $V$ be a $\tC$-module such that $V_n = 0$ for $n \gg 0$. If $\hd_i(SM(s)) < \infty$ for all $i, s \geqslant 0$, then $\hd_i(V) < \infty$ for all $i \geqslant 0$.
\end{corollary}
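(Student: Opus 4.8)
The plan is to apply Theorem \ref{main theorem two}, taking (Q) to be the property ``$V_n = 0$ for $n \gg 0$'' (bounded support) and (P) to be the property ``$\hd_i(V) < \infty$ for all $i \geqslant 0$'', i.e. being $n$-presented in finite degrees for every $n$. The zero module has both properties, and a module $V$ with $V_n = 0$ for $n \gg 0$ automatically has $\gd(V) < \infty$: if $V_m = 0$ for all $m > N$ then, since $\tC$ is directed, the $\tC$-submodule generated by $\bigoplus_{r \leqslant N} V_r$ is already all of $V$, so $\gd(V) \leqslant N$. Thus it remains to verify the four structural hypotheses of Theorem \ref{main theorem two}.

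For (P): the hypothesis $\hd_i(SM(s)) < \infty$ for all $i, s \geqslant 0$ says exactly that each $SM(s)$ is $n$-presented in finite degrees for every $n$; hence, applying Statements (5) and (6) of Lemma \ref{main technical lemma} for each $n$ separately, the property ``$n$-presented in finite degrees'' is $S$-dominant and $D$-predominant on $\tC \Module$ for each $n$, and intersecting over all $n$ shows that (P) itself is $S$-dominant and $D$-predominant on $\tC \Module$. Condition (C3) for (P) is the standard long exact sequence argument, exactly as in the second part of Corollary \ref{n finitely presented}: from $0 \to V' \to V \to V'' \to 0$ one gets exact sequences $(H_i(V'))_s \to (H_i(V))_s \to (H_i(V''))_s$, so finiteness of all $\hd_i(V')$ and $\hd_i(V'')$ forces finiteness of all $\hd_i(V)$. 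For (Q): since $(SV)_s = V_{s+1}$ and $DV$ is a quotient of $SV$, both $S$ and $D$ carry bounded-support modules to bounded-support modules; and since a short exact sequence is exact in each degree, whenever two of its three terms have bounded support so does the third.

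The only step that is not purely formal is the fourth hypothesis of Theorem \ref{main theorem two}: that for $V$ of bounded support the chain $0 = U^0 \subseteq U^1 \subseteq \cdots$, with $U^n / U^{n-1} = K(V/U^{n-1})$, stabilizes after finitely many steps. I would prove this directly rather than via Lemma \ref{finite filtration}, which would require $U = V_{\sin}$ to be finitely generated, not automatic over a general commutative ring $k$. By Lemma \ref{commutative diagram}, after identifying $(SU^n)_s$ with $(U^n)_{s+1} \subseteq V_{s+1} = (SV)_s$, one has $U^{n+1}_s = (\mu_V)_s^{-1}((U^n)_{s+1})$. Now argue by downward induction on the object $s$: if $V_m = 0$ for all $m > N$, then $U^n_s = 0$ for all $n$ whenever $s > N$, and if $U^n_{s+1}$ is independent of $n$ for $n \geqslant n_0$, then so is $U^{n+1}_s$. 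Hence $U^n_s$ is independent of $n$ once $n \geqslant N - s + 1$, and therefore $U^n = U^{N+1}$ for all $n \geqslant N+1$. Once all four hypotheses are in place, Theorem \ref{main theorem two} yields that $V$ has (P), which is the assertion.

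I expect the stabilization of the chain $U^n$ to be the main point requiring care; everything else is a direct instantiation of Lemma \ref{main technical lemma} and Theorem \ref{main theorem two}. A subtlety worth flagging in the writeup is that bounded support is \emph{not} inherited by syzygies --- a first syzygy of a bounded-support module typically has unbounded support, since $\tC(r, s)$ need not vanish for $r \leqslant N < s$ --- which is precisely why one cannot reduce to a finite computation and must route the argument through the inductive machinery.
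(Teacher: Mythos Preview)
Your proof is correct and follows essentially the same approach as the paper: both apply Theorem \ref{main theorem two} with the same choices of (P) and (Q), invoke Lemma \ref{main technical lemma} for condition (1), and handle conditions (2)--(3) routinely. The only cosmetic difference is in the verification of condition (4): the paper observes that the top nonzero degree of $V/U^i$ strictly decreases with $i$ (since $V/U^{i+1}$ embeds in $S(V/U^i)$), whereas you run a downward induction on the object $s$ via Lemma \ref{commutative diagram}; both yield stabilization after at most $N+1$ steps.
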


\begin{proof}
Let (Q) be the following property on $\tC$-modules $V$: $V_n = 0$ for $n \gg 0$. Clearly, this implies the finite generating degree of $V$. Let (P) be the property that $\hd_i(V) < \infty$ for all $i \geqslant 0$. The reader can check that (Q) satisfies conditions (2) and (3) in Theorem \ref{main theorem two}. Furthermore, by Lemma \ref{main technical lemma}, since $S M(s)$ satisfies (P) for every $s \geqslant 0$, (P) is $S$-dominant and $D$-predominant. Consequently, condition (1) in Theorem \ref{main theorem two} is fulfilled as well. To check condition (4), we note that
\begin{equation*}
\max \{ n \geqslant 0 \mid V_n \neq 0 \} > \max \{ n \geqslant 0 \mid (SV)_n \neq 0 \} \geqslant \max \{ n \geqslant 0 \mid (V/U^1)_n \neq 0 \}
\end{equation*}
since $(V/U^1)$ is a submodule of $SV$. From this observation one deduces that the length of the sequence $0=U^0 \subset U^1 \subseteq \ldots$ is bounded by $\max \{n \geqslant 0 \mid V_n \neq 0\}$, a finite number by the given assumption. Therefore, condition (4) holds, too.
\end{proof}

One more application is Corollary \ref{polynomial growth}. The following proof uses the same argument as that of \cite[Theorem C]{CEFN}.

\begin{proof}[Proof of Corollary \ref{polynomial growth}]
Let (P) be the following property of $\tC$-modules: $H_V$ coincides with a rational polynomial for $s \gg 0$. It is plain to check that (P) satisfies the conditions in Theorem \ref{main theorem}. By that theorem, every finitely generated $\tC$-module has (P).

Now we show that the polynomial associated to $H_V$ has degree at most $\gd(V)$. Recall that for some $n\geqslant 0$, we have
\begin{equation*}
0 = U^0 \subset U^1 \subset \cdots \subset U^n = V_{\sin} \quad \mbox{ and }\quad V/U^n = V_{\reg}.
\end{equation*}
Consider the short exact sequence $0 \to V/U^n \to S(V/U^n) \to D(V/U^n) \to 0$. By the induction hypothesis, the Hilbert function of $D(V/U^n)$ coincides with a polynomial of degree at most $\gd(D(V/U^n))$ for $s \gg 0$. But from the short exact sequence we also know that
\begin{equation*}
H_{D(V/U^n)} (s) = H_{V/U^n}(s+1) - H_{V/U^n} (s).
\end{equation*}
Therefore, for $s \gg 0$, $H_{V/U^n}$ coincides with a polynomial function whose degree is at most
\begin{equation*}
\gd(D(V/U^n)) + 1 = \gd(V/U^n) \leqslant \gd(V/U^{n-1}).
\end{equation*}
Now consider the short exact sequence $0 \to V/U^n \to S(V/U^{n-1}) \to D(V/U^{n-1}) \to 0$. We deduce that the Hilbert function of $S(V/U^{n-1})$ coincides with a polynomial with degree at most $\gd(V/U^{n-1})$ for $s \gg 0$, so does the Hilbert function of $V/U^{n-1}$. Eventually, one proves the conclusion for $V = V/U^0$.
\end{proof}

\section{Castelnuovo-Mumford regularity} \label{CM regularity section}

In this section we consider another important application: the Castelnuovo-Mumford regularity of $\tC$-modules. For abbreviation let (P) be the property of having finite Castelnuovo-Mumford regularity. We want to obtain a sufficient criterion such that every finitely generated $\tC$-module satisfies (P). By Theorem \ref{main theorem}, the sufficient criterion must guarantee that (P) is $S$-dominant and $D$-predominant, while the condition (C3) is fulfilled automatically.

Intuitively, a $\tC$-module $V$ has (P) if and only if it eventually has ``Koszul" behavior; that is, its homological degrees is bounded by a linear function with slope one. Motivated by Theorem \ref{noetherianity}, which deals with the situation that all homological degrees are finite, the reader may believe that we should require all $S M(s)$ to satisfy (P) for $s \geqslant 0$; or explicitly, for every $s \geqslant 0$, $\reg(S M(s)) \leqslant s + N_s$ where $N_s$ is a constant depending on $s$. However, it seems to us that this condition is not strong enough for proving the conclusion. In the following lemma we impose a stronger condition; that is, we require the regularity of all $S M(s)$ to be bounded by $s + N$, where $N$ is a nonnegative integer independent of $s$.

\begin{lemma} \normalfont
Let $V$ be a $\tC$-module and suppose that there is a nonnegative integer $N$ such that $\reg(S M(s)) \leqslant s + N$ for all $s \geqslant 0$. If the  map $\mu_V: V \to SV$ is injective, then
\begin{equation*}
\hd_i (V) \leqslant \reg(DV) + (N+1)i + 1.
\end{equation*}
\end{lemma}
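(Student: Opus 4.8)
The plan is to exploit the adaptable projective resolution $P^{\bullet}\to V\to 0$ together with the commutative diagram of short exact sequences coming from $0\to V\to SV\to DV\to 0$ (as in the proof of Statement~(6) of Lemma~\ref{main technical lemma}), namely the rows $0\to P^i\to SP^i\to DP^i\to 0$ mapping onto $0\to V\to SV\to DV\to 0$. Since the resolution is adaptable, each $P^i$ is a direct sum of modules $M(s)$ with $s\leqslant\gd(P^i)=\gd(Z^i)$, and $\gd(Z^i)\leqslant\hd_i(V)$; moreover $SP^i$ is the corresponding direct sum of $SM(s)$, so the hypothesis $\reg(SM(s))\leqslant s+N$ gives $\reg(SP^i)\leqslant\gd(P^i)+N\leqslant\hd_i(V)+N$, and in particular $\hd_j(SP^i)\leqslant\hd_i(V)+N+j$ for all $j$. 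The rows $0\to DZ^{i+1}\to DP^i\to DZ^i\to 0$ (exact, as in the proof of Statement~(6), using $\gd(DZ^i)=\gd(DP^i)$) will let me propagate homological bounds down the $D$-complex, starting from the known regularity of $DV=DZ^0$.

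First I would make the bookkeeping precise. From $0\to P^i\to SP^i\to DP^i\to 0$ one gets the long exact sequence $\cdots\to H_j(SP^i)\to H_j(DP^i)\to H_{j-1}(P^i)\to H_{j-1}(SP^i)\to\cdots$. Since $P^i$ is projective, $H_{j-1}(P^i)=0$ for $j-1\geqslant 1$, so $\hd_j(DP^i)\leqslant\hd_j(SP^i)\leqslant\hd_i(V)+N+j$ for $j\geqslant 2$, and for $j\in\{0,1\}$ one controls $\hd_j(DP^i)$ by $\max\{\hd_j(SP^i),\gd(P^i)\}\leqslant\hd_i(V)+N+j$ as well. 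Next, from the exact complex $DP^{\bullet}\to DV\to 0$ broken into $0\to DZ^{i+1}\to DP^i\to DZ^i\to 0$, I would run the long exact homology sequence to get $\hd_j(DZ^{i+1})\leqslant\max\{\hd_{j+1}(DZ^i),\ \hd_j(DP^i)\}$. Since $\reg(DV)<\infty$ is the hypothesis, $\hd_{j}(DZ^0)=\hd_j(DV)\leqslant\reg(DV)+j$ for all $j$. Iterating the recursion (using $\hd_i(V)\geqslant\gd(P^i)\geqslant$ the relevant generating degrees, and that $\hd_j(DP^i)\leqslant\hd_i(V)+N+j$) gives, after $i$ steps, a bound of the shape $\hd_0(DZ^i)\leqslant\reg(DV)+(N+1)i$; the term $(N+1)i$ accumulates because each of the $i$ descent steps contributes the $N$ from $\reg(SP^\bullet)$ plus a $+1$ from the homological shift in the long exact sequence.

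Finally, I would convert back from $D$ to the original module. The point is $\gd(DP^i)=\gd(DZ^i)=\hd_0(DZ^i)$, and by Statement~(2) of Lemma~\ref{main technical lemma}, $\gd(DP^i)=\gd(P^i)-1$, hence $\gd(P^i)=\hd_0(DZ^i)+1$. Since the resolution is adaptable, $\hd_i(V)=\gd(Z^i)=\gd(P^i)$, so $\hd_i(V)=\hd_0(DZ^i)+1\leqslant\reg(DV)+(N+1)i+1$, which is exactly the claimed inequality. (A subtlety: to even speak of $\hd_0(DZ^i)$ I should know $Z^i$, hence $P^i$, is finitely generated; this follows inductively because $\reg(DV)<\infty$ forces all the $\hd_j(DZ^i)$ appearing in the recursion to be finite, just as in the proof of Statement~(6).)

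The main obstacle I expect is getting the constants in the recursion exactly right — in particular making sure the descent $\hd_j(DZ^{i+1})\leqslant\max\{\hd_{j+1}(DZ^i),\hd_j(DP^i)\}$ is applied with the correct index shifts so that the accumulated error is precisely $(N+1)i+1$ and not something larger, and handling the low-degree cases $j=0,1$ of $\hd_j(DP^i)$ where projectivity of $P^i$ does not immediately kill the relevant $H_{j-1}(P^i)$ term. Everything else is a routine diagram chase built from Lemma~\ref{main technical lemma} and the notion of adaptable projective resolution.
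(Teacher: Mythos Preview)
Your overall strategy---pass to an adaptable resolution, apply $D$ to obtain short exact sequences $0\to DZ^{i+1}\to DP^i\to DZ^i\to 0$, bound $\reg(DP^i)$ via $\reg(SP^i)\leqslant \gd(P^i)+N$, and run a recursion starting from $\reg(DZ^0)=\reg(DV)$---is exactly the paper's approach. However, there is a genuine error that breaks the argument as written.

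You repeatedly assert $\gd(Z^i)\leqslant \hd_i(V)$ (and later even $\hd_i(V)=\gd(Z^i)=\gd(P^i)$). For an adaptable resolution this is false: only the reverse inequality $\hd_i(V)\leqslant \gd(Z^i)$ holds in general, since $H_i(V)$ is a subquotient of $H_0(P^i)$, while the map $H_0(P^0)\to H_0(V)$ can have a kernel supported in degree $\gd(V)$, forcing $\gd(Z^1)$ to exceed $\hd_1(V)$. Because of this, your bound $\hd_j(DP^i)\leqslant \hd_i(V)+N+j$ need not be valid (the correct bound is $\hd_j(DP^i)\leqslant \gd(P^i)+N+j$), and your final conversion ``$\hd_i(V)=\hd_0(DZ^i)+1$'' is unjustified. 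As stated, the recursion also threatens to become circular, since you are bounding quantities in terms of the very $\hd_i(V)$ you wish to control.

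The fix is simple and is precisely what the paper does: do not mention $\hd_i(V)$ during the induction at all. Instead prove, by induction on $i$, the pair of inequalities
\[
\gd(Z^i)\leqslant \reg(DV)+(N+1)i+1 \qquad\text{and}\qquad \reg(DZ^i)\leqslant \reg(DV)+(N+1)i,
\]
using $\reg(DZ^{i+1})\leqslant\max\{\reg(DP^i),\,\reg(DZ^i)+1\}$, $\reg(DP^i)\leqslant\reg(SP^i)\leqslant\gd(P^i)+N=\gd(Z^i)+N$, and $\gd(Z^{i+1})=\gd(DZ^{i+1})+1\leqslant\reg(DZ^{i+1})+1$. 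Only at the very end do you invoke $\hd_i(V)\leqslant\gd(P^i)=\gd(Z^i)$ to extract the desired conclusion. Packaging the $\hd_j(DZ^i)$ bounds into a single $\reg(DZ^i)$ also keeps the bookkeeping clean and removes the index-shift concerns you flagged.

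Two minor remarks: the exactness of $0\to DZ^{i+1}\to DP^i\to DZ^i\to 0$ comes from Statement~(4) of Lemma~\ref{main technical lemma} (via $KZ^i=0$, which follows from Statement~(1) since $Z^i\hookrightarrow P^{i-1}$ for $i\geqslant 1$ and $KV=0$ by hypothesis for $i=0$), not from the equality $\gd(DZ^i)=\gd(DP^i)$ you cite; and the claim $\reg(DP^i)\leqslant\reg(SP^i)$ needs the short argument the paper gives (comparing the long exact sequence of $0\to P^i\to SP^i\to DP^i\to 0$), which you essentially sketch.
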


\begin{proof}
The conclusion holds clearly if $\reg(DV) = \infty$ or $V= 0$, so we assume that $\reg(DV) < \infty$ and $V$ is nonzero. Let $P^{\bullet} \to V \to 0$ be an adaptable projective resolution defined in Definition \ref{adaptable projective resolution}. As explained in the proof of Statement (5) of Lemma \ref{main technical lemma}, since the map $V \to SV$ is injective, the adaptable projective resolution $P^{\bullet} \to V \to 0$ induces an exact complex $DP^{\bullet} \to DV \to 0$ which in general is not a projective resolution.

\textbf{Claim}: $\reg(D M(s)) \leqslant \reg(S M(s))$, and hence for a term $P^j$ appearing in $P^{\bullet} \to V \to 0$, one has $\reg(DP^j) \leqslant \reg(SP^j)$. Indeed, from the short exact sequence $0 \to M(s) \to SM(s) \to DM(s) \to 0$ and the long exact sequence
\begin{equation*}
\ldots \to H_1(M(s)) = 0 \to H_1 (SM(s)) \to H_1(DM(s)) \to H_0(M(s)) \to H_0(SM(s)) \to H_0(DM(s)) \to 0
\end{equation*}
associated with it, we deduce that
\begin{equation*}
\gd(D M(s)) \leqslant \gd(S M(s)) \leqslant \reg(S M(s)).
\end{equation*}
and
\begin{align*}
\hd_1(D M(s)) - 1 & \leqslant \max \{ \hd_1 (S M(s)) - 1, \, \gd(M(s)) -1 \}\\
& \leqslant \max \{ \hd_1 (S M(s)) - 1, \, \gd(S M(s)) \} & \text{Statement (3) of Lemma \ref{main technical lemma}}\\
& \leqslant \reg(S M(s)).
\end{align*}
For $i \geqslant 2$, we have $\hd_i(S M(s)) = \hd_i(D M(s))$. The claim follows from these observations.

Now we turn to the projective resolution $P^{\bullet} \to V \to 0$ and use it to study homological degrees of $V$. Recall that $Z^0 = V$, $Z^1$ is the kernel of $P^0 \to V$, and $Z^i$ is the kernel of $P^{i-1} \to P^{i-2}$ for $i \geqslant 2$. We show that
\begin{align*}
\gd(Z^i) & \leqslant \reg(DV) + (N+1)i + 1;\\
\reg(DZ^i) & \leqslant \reg(DV) + (N+1)i.
\end{align*}
The conclusion of this lemma follows immediately from the first inequality.

For $i = 0$, these inequalities hold clearly since $\gd(DV) \leqslant \reg(DV)$ and
\begin{equation*}
\gd(V) = \gd(DV) + 1 \leqslant \reg(DV) + 1.
\end{equation*}
Suppose that they hold for all $j$ with $j \leqslant i$ and consider $j = i + 1$. Note that for the short exact sequence $0 \to Z^{i+1} \to P^i \to Z^i \to 0$, the natural map $Z^i \to SZ^i$ is injective since $Z^i$ is either $V$ or a submodule of $P^{i-1}$ for $i \geqslant 1$, and in the latter case Statement (1) of Lemma \ref{main technical lemma} applies. Therefore, by Statement (4) of Lemma \ref{main technical lemma}, we get a short exact sequence $0 \to DZ^{i+1} \to DP^i \to DZ^i \to 0$, and have
\begin{align*}
\reg(DZ^{i+1}) & \leqslant \max \{ \reg(DP^i), \, \reg(DZ^i) + 1 \}\\
& \leqslant \max \{ \reg(SP^i), \, \reg(DV) + i(N+1) + 1 \} & \text{ by the induction hypothesis} \\
& \leqslant \max \{ \gd(P^i) + N, \, \reg(DV) + i(N+1) + 1 \} & \text{ by the given assumption} \\
& \leqslant \max \{ \gd(Z^i) + N, \, \reg(DV) + i(N+1) + 1 \} & \text{ since $\gd(P^i) = \gd(Z^i)$}\\
& \leqslant \max \{ \reg(DV) + (N+1)i + 1 + N, \, \reg(DV) + i(N+1) + 1 \} & \text{  by the induction hypothesis}\\
& \leqslant \reg(DV) + (N+1)(i+1).
\end{align*}
Therefore,
\begin{equation*}
\gd(Z^{i+1}) \leqslant \gd(DZ^{i+1}) + 1 \leqslant \reg(DZ^{i+1}) + 1 \leqslant \reg(DV) + (N+1)(i+1) + 1.
\end{equation*}
The conclusion follows by induction.
\end{proof}

\begin{remark} \normalfont
The intuition underlying the conclusion of this lemma is as follows. If $DV$ has finite regularity, then $\hd_{i+1}(DV) - \hd_i(DV) \leqslant 1$ for $i \gg 0$. However, since in the exact sequence $0 \to DZ^{i+1} \to DP^i \to DZ^i \to 0$ the middle term is not projective, one cannot deduce that $\gd(DZ^{i+1}) \leqslant \gd(DZ^i) + 1$ and hence $\gd(Z^{i+1}) \leqslant \gd(Z^i) + 1$ for $i \gg 0$. Loosely speaking, the nonnegative integer $N$ amplifies the difference between $\gd(Z^{i+1})$ and $\gd(Z^i)$.
\end{remark}

In the case $N = 0$, we have an immediate corollary:

\begin{corollary} \label{D-dominant of regularity}
Let $V$ be a $\tC$-module. Suppose that $\reg(S M(s)) \leqslant s$ for $s \geqslant 0$. If the  map $\mu_V: V \to SV$ is injective, then $\reg(V) \leqslant \reg(DV) + 1$. In particular, the property (P) of having finite Castelnuovo-Mumford regularity is $D$-predominant.
\end{corollary}

The next lemma and its corollary assert that if $N = 0$ in the previous lemma, then the property (P) given in Corollary \ref{D-dominant of regularity} is both $S$-dominant and $S$-invariant.

\begin{lemma}
Suppose that $\reg(S M(s)) \leqslant s$ for all $s \geqslant 0$. Then for any $\tC$-module $V$, one has
\begin{align*}
\hd_i(SV) & \leqslant \max \{ \hd_j(V) + i -j \mid 0 \leqslant j \leqslant i \};\\
\hd_i(V) & \leqslant \max \{ \{ \hd_j(V) + i - j \mid 0 \leqslant j \leqslant i-1  \} \cup \{ \hd_i(SV) + 1 \} \}.
\end{align*}
\end{lemma}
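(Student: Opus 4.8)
The plan is to use a free resolution of $V$ together with the exact complex it induces after applying $S$, and to compare the homology of these complexes degree by degree. Concretely, I would take an adaptable projective resolution $P^{\bullet} \to V \to 0$, so each $P^i$ is a direct sum of copies of various $M(s)$ with $\gd(P^i) = \gd(Z^i)$, where $Z^0 = V$ and $Z^i$ is the $i$-th syzygy. Since $S$ is exact, $SP^{\bullet} \to SV \to 0$ is again an exact complex, but it is no longer a projective resolution because $SM(s)$ need not be projective. The point is that each $SP^i$ is a finite direct sum of the modules $SM(s)$, and by hypothesis $\reg(SM(s)) \leqslant s$, so $\hd_\ell(SP^i) \leqslant \gd(P^i) + \ell = \gd(Z^i) + \ell$ for all $\ell \geqslant 0$.

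For the first inequality, the plan is to run a hyperhomology / spectral-sequence (or, more elementarily, an iterated long-exact-sequence) argument on $SP^{\bullet}$. Breaking $SP^{\bullet} \to SV \to 0$ into short exact sequences $0 \to SZ^{j+1} \to SP^j \to SZ^j \to 0$ (using that $S$ is exact so $SZ^{j+1}$ really is the kernel of $SP^j \to SP^{j-1}$), one gets long exact sequences in homology
\begin{equation*}
\cdots \to H_{m+1}(SZ^j) \to H_m(SZ^{j+1}) \to H_m(SP^j) \to H_m(SZ^j) \to \cdots.
\end{equation*}
Starting from $\hd_\ell(SP^j) \leqslant \gd(Z^j) + \ell$ and the already-established bound $\gd(Z^j) = \gd(DZ^j) + 1 \leqslant \gd(Z^{j-1}) + 1$ (this uses Statement (2) of Lemma \ref{main technical lemma}, or more precisely the $N=0$ case of the previous lemma, which gives $\gd(Z^j) \leqslant \gd(V) + j$), one unwinds the long exact sequences to bound $\hd_i(SV) = \hd_i(SZ^0)$ in terms of the $\hd_j(V)$. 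A clean bookkeeping way to phrase it: the $E^1$-page of the spectral sequence of the filtered complex $SP^{\bullet}$ has entries $H_\ell(SP^j)$ converging to $H_{\ell - j}(SV)$ (or $0$ in higher positions since the complex is exact), and a nonzero contribution to $(H_i(SV))_s$ forces $s \leqslant \gd(P^j) + (i+?) = \gd(Z^j) + \cdots$ for some $j \leqslant i$; keeping track of the indices yields $\hd_i(SV) \leqslant \max\{ \hd_j(V) + i - j : 0 \leqslant j \leqslant i\}$, using $\hd_j(V) = \hd_j(Z^0) \geqslant \gd(Z^j) - j$ — wait, rather one uses $\hd_j(V) = \gd(Z^j)$ when the resolution is minimal, but for an adaptable (not necessarily minimal) resolution one has $\gd(Z^j) \leqslant \max\{\hd_\ell(V) + j - \ell : 0 \leqslant \ell \leqslant j\}$ by the standard syzygy estimate, and this is exactly the input needed.

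The second inequality is essentially the first one read backwards, using the short exact sequence $0 \to V \to SV \to DV \to 0$ — but one must be careful because the map $\mu_V$ need not be injective here, so really the relevant sequence is $0 \to V/KV \to SV \to DV \to 0$. The cleanest route: from the four-term exact sequence $0 \to KV \to V \to SV \to DV \to 0$ and the long exact homology sequences obtained by splitting it, one gets $\hd_i(V) \leqslant \max\{\hd_i(SV) + 1, \hd_{i+1}(DV)\}$ type bounds; then invoking Statement (2) of Lemma \ref{main technical lemma} to replace $DV$-quantities by $V$-quantities with a shift ($\gd(DV) = \gd(V) - 1$, and inductively $\hd_j(DV) \leqslant \cdots$) and iterating on $i$ produces the claimed $\hd_i(V) \leqslant \max\{\{\hd_j(V) + i - j : 0 \leqslant j \leqslant i - 1\} \cup \{\hd_i(SV) + 1\}\}$. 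I expect the main obstacle to be the bookkeeping in the spectral-sequence (or iterated long-exact-sequence) argument for the first inequality: one has to simultaneously induct on the homological degree $i$ and carry along the syzygy estimates $\gd(Z^j) \leqslant \max\{\hd_\ell(V) + j - \ell\}$, and it is easy to be off by one in the indices. A secondary subtlety is handling the possibly-non-injective $\mu_V$ in the second inequality, which forces one to work with $V/KV$ and $KV$ separately rather than with the clean short exact sequence $0 \to V \to SV \to DV \to 0$ — though since $KV \subseteq V$ one has $\hd_i(KV)$ controlled in the appropriate range, so this should go through without serious difficulty.
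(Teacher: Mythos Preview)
Your plan for the first inequality is workable, though more elaborate than what the paper does. The paper takes a single short exact sequence $0 \to W \to P \to V \to 0$ with $P$ projective and $\gd(P)=\gd(V)$, applies $S$, and inducts on $i$ using the induction hypothesis for $W$ together with $\hd_j(W)=\hd_{j+1}(V)$ and $\reg(SP)\leqslant\gd(P)$. Your full-resolution / spectral-sequence version unwinds to the same bound but with more bookkeeping; either way the first inequality goes through.

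The second inequality, however, has a genuine gap in your outline. You propose to use the four-term sequence $0 \to KV \to V \to SV \to DV \to 0$ and then bound $\hd_i(V)$ via $\hd_i(SV)$, $\hd_{i+1}(DV)$, and $\hd_j(KV)$. But nothing in the hypotheses or in Lemma~\ref{main technical lemma} gives you control over $\hd_j(DV)$ for $j\geqslant 1$ (Statement (2) only handles $j=0$), and the claim that ``since $KV\subseteq V$ one has $\hd_i(KV)$ controlled in the appropriate range'' is simply false in general --- submodules can have arbitrarily large homological degrees relative to the ambient module. So this route does not close.

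The paper's argument for the second inequality avoids $KV$ and $DV$ entirely. It reuses the same syzygy $W$: since $P$ is projective, $\hd_{i+1}(V)=\hd_i(W)$; apply the induction hypothesis (the second inequality at level $i$) to $W$; then translate $\hd_j(W)$ back to $\hd_{j+1}(V)$ and bound the remaining term $\hd_i(SW)$ using the shifted sequence $0 \to SW \to SP \to SV \to 0$, which gives $\hd_i(SW)\leqslant\max\{\gd(P)+i,\,\hd_{i+1}(SV)\}$. This is the missing idea: the second inequality is proved by the same dimension-shift trick as the first, not by the $V \to SV \to DV$ sequence.
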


\begin{proof}
Take a short exact sequence $0 \to W \to P \to V \to 0$ such that $P$ is projective and $\gd(P) = \gd(V)$. Applying $S$ we get another short exact sequence $0 \to SW \to SP \to SV \to 0$. We will use these two sequences to prove the inequalities by induction. They are true for $i = 0$ by Statement (2) of Lemma \ref{main technical lemma}. Suppose that they hold for $j$ with $j \leqslant i$, and consider the $(i+1)$-th homological degrees.

\textit{Proof of the first inequality.} By the short exact sequence $0 \to SW \to SP \to SV \to 0$ we have:
\begin{align*}
\hd_{i+1}(SV)  & \leqslant \max \{ \hd_i(SW), \, \hd_{i+1}(SP) \} \\
& \leqslant \max \{\hd_i(SW), \, \gd(P) + i + 1 \} & \text{ since $\reg(SP) \leqslant \gd(P)$}\\
& \leqslant \max \{ \{ \hd_j(W) + i -j \mid 0 \leqslant j \leqslant i \} \cup \{ \gd(P) + i + 1 \} \} & \text{ the induction hypothesis on $W$}
\end{align*}
Note that $\gd(P) = \gd(V)$, $\hd_j(W) = \hd_{j+1}(V)$ for $j \geqslant 1$, and
\begin{equation*}
\gd(W) \leqslant \max \{\gd(P), \, \hd_1(V) \}.
\end{equation*}
Putting them together, we obtain
\begin{align*}
\hd_{i+1}(SV)  & \leqslant \max \{ \gd(V) + i, \, \hd_1(V) + i, \, \hd_2(V) + s - 1, \, \ldots, \, \hd_{i+1}(V), \,  \gd(V) + i + 1\} \\
& = \max \{ \hd_j(V) + i + 1 -j \mid 0 \leqslant j \leqslant i + 1 \}.
\end{align*}
Therefore, the first inequality holds for $i+1$.

\textit{Proof of the second inequality.} Consider the short exact sequence $0 \to W \to P \to V \to 0$. By induction on $i$,
\begin{equation*}
\hd_{i+1}(V) \leqslant \hd_i(W) \leqslant \max \{ \gd(W) + i, \, \ldots, \, \hd_{i-1}(W) + 1, \, \hd_i(SW) + 1 \}.
\end{equation*}
We also have $\gd(W) \leqslant \max \{\gd(V), \, \hd_1(V) \}$ and $\hd_j(W) = \hd_{j+1}(V)$ for $j \geqslant 1$. With these observations, we obtain:
\begin{equation*}
\hd_{i+1}(V) \leqslant \max \{ \gd(V) + i, \, \hd_1(V) + i, \, \ldots, \, \hd_i(V) + 1, \, \hd_i(SW) + 1 \}.
\end{equation*}
However, from the short exact sequence $0 \to SW \to SP \to SV \to 0$ we deduce that
\begin{equation*}
\hd_i(SW) \leqslant \max \{ \hd_i(SP), \, \hd_{i+1}(SV) \} \leqslant \max \{ \gd(P) + i, \, \hd_{i+1}(SV) \}.
\end{equation*}
Combining these two inequalities, we conclude that
\begin{equation*}
\hd_{i+1}(V) \leqslant \max \{ \gd(V) + i + 1, \, \hd_1(V) + i, \, \ldots, \, \hd_i(V) + 1, \, \hd_{i+1}(SV) + 1 \},
\end{equation*}
so the second inequality holds for $i+1$.
\end{proof}

\begin{remark} \normalfont
Note in the above proof we do not require those homological degrees to be finite.
\end{remark}

We describe two useful corollaries.

\begin{corollary} \label{shift equivalent of regularity}
Suppose that $\reg(S M(s)) \leqslant s$ for $s \geqslant 0$ and let $V$ be a $\tC$-module. Then
\begin{equation*}
\reg(SV) \leqslant \reg(V) \leqslant \reg(SV) + 1.
\end{equation*}
In particular, the property (P) of having finite Castelnuovo-Mumford regularity is $S$-dominant and $S$-invariant.
\end{corollary}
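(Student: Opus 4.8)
The plan is to read off both inequalities directly from the two estimates on homological degrees proved in the preceding lemma, using only the definition $\reg(W) = \sup_{i \geqslant 0}\{\hd_i(W) - i\}$ valid for any $\tC$-module $W$. Throughout I may assume $\reg(V)$ (resp. $\reg(SV)$) is finite, since otherwise the relevant inequality is vacuous.

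For $\reg(SV) \leqslant \reg(V)$, I would subtract $i$ from the first estimate $\hd_i(SV) \leqslant \max\{\hd_j(V) + i - j \mid 0 \leqslant j \leqslant i\}$, getting $\hd_i(SV) - i \leqslant \max\{\hd_j(V) - j \mid 0 \leqslant j \leqslant i\} \leqslant \reg(V)$, and then take the supremum over $i \geqslant 0$. For $\reg(V) \leqslant \reg(SV) + 1$, I would prove by induction on $i$ that $\hd_i(V) - i \leqslant \reg(SV) + 1$. The base case $i = 0$ is $\hd_0(V) = \gd(V) \leqslant \gd(SV) + 1 \leqslant \reg(SV) + 1$, where the first inequality is Statement (3) of Lemma \ref{main technical lemma} and the second is immediate from the definition of regularity. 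For the inductive step, subtract $i$ from the second estimate of the lemma, so that $\hd_i(V) - i \leqslant \max\bigl(\{\hd_j(V) - j \mid 0 \leqslant j \leqslant i-1\} \cup \{\hd_i(SV) - i + 1\}\bigr)$; each term with $j \leqslant i-1$ is at most $\reg(SV) + 1$ by the inductive hypothesis, and $\hd_i(SV) - i + 1 \leqslant \reg(SV) + 1$ by definition. Taking the supremum over $i$ gives $\reg(V) \leqslant \reg(SV) + 1$; the same argument simultaneously shows that finiteness of $\reg(SV)$ forces every $\hd_i(V)$ to be finite.

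The ``in particular'' assertion then follows at once: finite $\reg(V)$ implies finite $\reg(SV)$ by the first inequality, so (P) is $S$-invariant; and finite $\reg(SV)$ implies finite $\reg(V)$ by the second inequality, so (P) is $S$-dominant. I do not expect a genuine obstacle here, as the corollary is purely a bookkeeping consequence of the previous lemma. The one point requiring a little care is the induction establishing the second inequality: one must check that the lemma's estimate bounds $\hd_i(V)$ in terms of strictly lower-indexed homological degrees of $V$ together with a \emph{single} homological degree of $SV$, so that the induction closes without circularity and, as a byproduct, upgrades ``$\reg(SV)$ finite'' to ``all $\hd_i(V)$ finite''.
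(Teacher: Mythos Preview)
Your proof is correct and follows essentially the same route as the paper's: both derive $\reg(SV)\leqslant\reg(V)$ by subtracting $i$ from the first estimate of the preceding lemma, and both prove $\hd_i(V)\leqslant\reg(SV)+i+1$ by induction on $i$ using the second estimate. The only cosmetic difference is that you justify the base case via Statement~(3) of Lemma~\ref{main technical lemma}, whereas the paper leaves it implicit (it is also the $i=0$ case of the lemma's second estimate, since the index set $0\leqslant j\leqslant -1$ is empty).
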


\begin{proof}
By the previous lemma, for $i \geqslant 0$, one has
\begin{equation*}
\hd_i(SV) - i \leqslant \max \{ \hd_j(V) -j \mid 0 \leqslant j \leqslant i \} \leqslant \reg(V),
\end{equation*}
so $\reg(SV) \leqslant \reg(V)$.

To show $\reg(V) \leqslant \reg(SV) + 1$, it suffices to check that $\hd_i(V) \leqslant \reg(SV) + i + 1$ for $i \geqslant 0$, which holds for $i = 0$. Suppose that it holds for all nonnegative integers less than or equal to $i$. Now for $i+1$, by the previous lemma,
\begin{equation*}
\hd_{i+1} (V) \leqslant \max \{ \{ \hd_j(V) + i + 1 - j \mid 0 \leqslant j \leqslant i  \} \cup \{ \hd_{i+1}(SV) + 1 \} \}.
\end{equation*}
One has $\hd_{i+1}(SV) + 1 \leqslant \reg(SV) + i + 2$; and for $0 \leqslant j \leqslant i$, by the induction hypothesis,
\begin{equation*}
\hd_j(V) + i + 1 - j \leqslant (\reg(SV) + j + 1) + i + 1 - j = \reg(SV) + i + 2.
\end{equation*}
Therefore, $\hd_{i+1}(V) \leqslant \reg(SV) + i + 2$ as claimed. By induction, $\reg(V) \leqslant \reg(SV) + 1$.
\end{proof}

The second corollary generalizes \cite[Theorem 1.8]{GL} and the second statement in \cite[Theorem 1.5]{L1}.

\begin{corollary}
Suppose that $\reg(S M(s)) \leqslant s$ for $s \geqslant 0$. Let $V$ be a $\tC$-module such that $V_n = 0$ for $n > N$. Then $\reg(V) \leqslant N$. In particular, if $\tC (s, s)$ is a semisimple algebra for every $s \geqslant 0$, then the $i$-th homology group $H_i (\tC (s, s))$ is either zero or is concentrated on object $s + i$ for $s, i \geqslant 0$, where $\tC(s, s)$ is regarded as a $\tC$-module concentrated on $s$; in other words, $\tC$ is a Koszul category.
\end{corollary}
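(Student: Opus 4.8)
The plan is to establish the regularity bound $\reg(V)\leqslant N$ first, by induction on $N$ using the shift functor, and then to bootstrap the Koszulity statement from it.

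For the inequality $\reg(V)\leqslant N$, I would induct on $N$. The case $V=0$ is trivial, so assume $V\neq 0$; then $N\geqslant 0$ since $V$ is nonzero in some degree $\leqslant N$. Since $S$ is the pull-back along a self-embedding $\iota$ with $\iota(t)=t+1$, we have $(SV)_n=V_{n+1}$, so the hypothesis $V_n=0$ for $n>N$ forces $(SV)_n=0$ for $n>N-1$. If $N=0$ this means $SV=0$; if $N\geqslant 1$ the induction hypothesis gives $\reg(SV)\leqslant N-1$. Either way $\reg(SV)\leqslant N-1$, and since the standing hypothesis $\reg(SM(s))\leqslant s$ is exactly what Corollary \ref{shift equivalent of regularity} requires, that corollary gives $\reg(V)\leqslant\reg(SV)+1\leqslant N$.

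For the Koszulity statement, fix $s\geqslant 0$ and set $V=\tC(s,s)$, viewed as the $\tC$-module $M(s)/\mathfrak{m}M(s)=H_0(M(s))$, which is concentrated on the single object $s$; since $\tC(s,s)$ is semisimple, $V$ is already a direct sum of simple modules concentrated on $s$, so it suffices to treat $V$ itself. Applying the first part with $N=s$ gives $\reg(V)\leqslant s$, hence $\hd_i(V)\leqslant s+i$ for every $i$; equivalently $H_i(V)$ vanishes on all objects $>s+i$. It remains to show $H_i(V)$ vanishes on all objects $<s+i$. For this I would use the semisimplicity of each $\tC(t,t)$: it makes $\mathfrak{m}$ the radical of $\tC$ in each degree, so a minimal projective resolution $\cdots\to P^1\to P^0\to V\to 0$ exists (one may construct it degree by degree, so that no Noetherian hypothesis on $k$ is needed), meaning $Z^{i+1}:=\ker(P^i\to Z^i)\subseteq\mathfrak{m}P^i$ with $Z^0=V$. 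One then shows by induction that $Z^i$ is supported in degrees $\geqslant s+i$: this holds for $i=0$; and if $Z^i$ is supported in degrees $\geqslant s+i$, so is $H_0(Z^i)=Z^i/\mathfrak{m}Z^i$, whence $P^i$ is generated in degrees $\geqslant s+i$ and therefore — since $\tC$ is directed — supported in degrees $\geqslant s+i$, so that $\mathfrak{m}P^i$, and hence $Z^{i+1}$, is supported in degrees $\geqslant s+i+1$. Since $H_i(V)\cong H_0(Z^i)$ by dimension shifting, $H_i(V)$ is supported in degrees $\geqslant s+i$; combined with the vanishing above $s+i$, this concentrates $H_i(V)$ on the object $s+i$ (or makes it zero), which is the asserted Koszulity.

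The step I expect to be the main obstacle is the lower bound in the Koszulity half: finite regularity gives no control on $H_i$ from below, so one genuinely needs the semisimplicity hypothesis to produce a minimal resolution, and the induction on the supports of the syzygies $Z^i$ must be set up carefully. Everything else — the induction on $N$, the identity $(SV)_n=V_{n+1}$, and the invocation of Corollary \ref{shift equivalent of regularity} — is routine.
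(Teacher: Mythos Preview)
Your proposal is correct and follows essentially the same approach as the paper: the paper also proves the first statement by induction on $N$ via the shift relation $(SV)_n=V_{n+1}$ together with the bound $\reg(V)\leqslant\reg(SV)+1$, and for the Koszulity statement it simply notes that projective covers exist under the semisimplicity hypothesis and defers the details to \cite[Subsection~5.2]{L1}. Your write-up supplies exactly those deferred details; the only minor imprecision is that the isomorphism $H_i(V)\cong H_0(Z^i)$ is not literally ``dimension shifting'' (that alone yields $H_i(V)\cong H_1(Z^{i-1})\hookrightarrow H_0(Z^i)$) but follows once you also use minimality of the resolution --- either formulation suffices for the support argument.
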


\begin{proof}
Without loss of generality one can assume that $V \neq 0$, $N < \infty$, and $V_N \neq 0$. The first statement can be proved by an easy induction on $N$ together with the conclusion of the previous lemma. To prove the second statement, one just notes that projective covers exist in $\tC \Module$ under the given condition. For details, one can refer to \cite[Subsection 5.2]{L1}.
\end{proof}

Now we are ready to prove Corollary \ref{cm regularity}.

\begin{proof}[Proof of Corollary \ref{cm regularity}]
It has been established in Corollaries \ref{D-dominant of regularity} and \ref{shift equivalent of regularity} that the property of having finite Castelnuovo-Mumford regularity is $D$-predominant and $S$-dominant. Since $\reg(SM(s)) \leqslant s$ for $s \geqslant 0$, all homological degrees of $S M(s)$ are finite, so they are finitely presented as $k$ is a Noetherian ring. Now the reader can apply Theorem \ref{main theorem}, noting that condition (C3) holds automatically.
\end{proof}

\end{document}